\theoremstyle{plain}
\newtheorem{theorem}{Theorem}
\newtheorem{proposition}[theorem]{Proposition}
\newtheorem{lemma}[theorem]{Lemma}
\theoremstyle{definition}
\theoremstyle{remark}
\newtheorem{remark}{Remark}
\begin{document}
	
	\author{Phuong Le}
	\address{Phuong Le$^{1,2}$ (ORCID: 0000-0003-4724-7118)\newline
		$^1$Faculty of Economic Mathematics, University of Economics and Law, Ho Chi Minh City, Vietnam; \newline
		$^2$Vietnam National University, Ho Chi Minh City, Vietnam}
	\email{phuongl@uel.edu.vn}
	
	\subjclass[2020]{35J92, 35B06, 35A02, 34C25}
	\keywords{quasilinear elliptic equation, symmetry solution, bounded solution, ODE analysis}
	
	
	
	\title[Symmetry of bounded solutions]{Symmetry of bounded solutions to quasilinear elliptic equations in a half-space}
	\begin{abstract}
		Let $u$ be a bounded positive solution to the problem $-\Delta_p u = f(u)$ in $\mathbb{R}^N_+$ with zero Dirichlet boundary condition, where $p>1$ and $f$ is a locally Lipschitz continuous function. Among other things, we show that if $f(\sup_{\mathbb{R}^N_+} u)=0$ and $f$ satisfies some other mild conditions, then $u$ depends only on $x_N$ and monotone increasing in the $x_N$-direction. Our result partially extends a classical result of Berestycki, Caffarelli and Nirenberg in 1993 to the $p$-Laplacian.
	\end{abstract}
	
	\maketitle
	
	\section{Introduction}
	We study the monotonicity and one-dimensional symmetry of bounded solutions to the problem
	\begin{equation}\label{main}
		\begin{cases}
			-\Delta_p u = f(u) &\text{ in } \mathbb{R}^N_+,\\
			u > 0 &\text{ in } \mathbb{R}^N_+,\\
			u = 0 &\text{ on } \partial\mathbb{R}^N_+,
		\end{cases}
	\end{equation}
	where $\mathbb{R}^N_+ := \{x=(x',x_N)\in\mathbb{R}^{N-1}\times\mathbb{R} \mid x_N>0\}$ denotes the upper half-space, $f$ is a locally Lipschitz continuous function and $-\Delta_p u = -\text{div}(|\nabla u|^{p-2}\nabla u)$ is the $p$-Laplacian of $u$.
	
	When $p=2$, the problem reduces to
	\begin{equation}\label{semilinear}
		\begin{cases}
			-\Delta u = f(u) &\text{ in } \mathbb{R}^N_+,\\
			u > 0 &\text{ in } \mathbb{R}^N_+,\\
			u = 0 &\text{ on } \partial\mathbb{R}^N_+.
		\end{cases}
	\end{equation}
	Problems of this type arise in the derivation of a priori bounds for positive solutions of nonlinear second
	order elliptic problems on smooth bounded domains, in the study of semilinear problems with small diffusion on smooth bounded domains and in the study of regularity results for some free boundary
	problems (see \cite{MR619749,MR1044809,MR1260436,MR1470317,MR2145284}).
	
	The monotonicity of classical solutions to \eqref{semilinear} was studied in a series of seminal papers by Berestycki, Caffarelli and Nirenberg \cite{MR1260436,MR1395408,MR1655510,MR1470317}. Among other results, they showed in \cite{MR1395408,MR1655510} that if $f$ is a Lipschitz continuous function with $f(0) \ge0$ then any classical solution of \eqref{semilinear} is monotone increasing in the $x_N$-direction, furthermore, $\frac{\partial u}{\partial x_N} > 0$ in $\mathbb{R}^N_+$. When $f(0) \ge0$ and $f$ is merely locally Lipschitz continuous, the same monotonicity can be proved for positive solutions which are bounded in each strip $\{x\in\mathbb{R}^N \mid 0<x_N<\lambda\}$, see \cite{MR4142367}. The case $f(0) <0$ is more involved, and a complete proof of the monotonicity is only known in dimension $N=2$, see \cite{MR3593525,MR3641643}. These results were obtained via the well-known method of moving planes, which was introduced by Alexandrov \cite{MR143162} in the context of differential geometry and by Serrin \cite{MR333220} in the study of an overdetermined problem.
	
	The one-dimensional symmetry (also called rigidity) of solutions was also investigated by several authors. In particular, using the sliding method, Berestycki, Caffarelli and Nirenberg \cite{MR1470317} proved the following result (see also \cite{MR794096,MR937538}): Assume that $f:[0,+\infty)\to\mathbb{R}$ is a Lipschitz continuous function such that for some $0<t_0<t_1<\rho$,
	\begin{enumerate}
		\item[(i)] $f(t)>0$ on $(0,\rho)$ and $f(t)\le0$ on $[\rho,+\infty)$,
		\item[(ii)] $f(t)>c_0 t$ on $(0,t_0)$ for some $c_0>0$,
		\item[(iii)] $f$ is nonincreasing on $(t_1, \rho)$.
	\end{enumerate}
	Then every solution $u$ of \eqref{semilinear} depends only on $x_N$ and monotone increasing in the $x_N$-direction.
	For more general nonlinearities, it was shown in \cite{MR1260436} that if $u$ is a bounded positive solution of \eqref{semilinear} and $f(\sup_{\mathbb{R}^N_+}u)\le0$, then $u$ is one-dimensional.
	
	The monotonicity and symmetry of solutions to $p$-Laplace problems \eqref{main} were also studied by several authors. Since the $p$-Laplacian is nonlinear and singular (for $1<p<2$) or degenerate (for $p>2$), many properties which hold for \eqref{semilinear} do not hold for \eqref{main} in general, including the $C^2$ regularity, the weak and strong comparison principles. Hence it is not easy to extend the known results for problem \eqref{semilinear} to problem \eqref{main}. Nevertheless, some progress has been made in the study of qualitative properties of solutions to \eqref{main} in the last decades.	
	The monotonicity of solutions to \eqref{main} with bounded gradients was established by Farina, Montoro, Riey and Sciunzi via the moving plane method, see \cite{MR3303939,MR2886112} for the case $1 < p < 2$ and \cite{MR3752525,MR3118616}  for the case $p > 2$. A key assumption in these works is that $f:[0,+\infty)\to\mathbb{R}$ is a locally Lipschitz continuous function which is positive on $(0,+\infty)$. To the best of our knowledge, there are only two monotonicity results for \eqref{main} with sign-changing nonlinearities, which were obtained in \cite{MR4439897} and \cite{MR3371569}. The precise assumption in \cite{MR4439897} is that $\frac{2N+2}{N+2}<p<2$, $f:[0,+\infty)\to\mathbb{R}$ is locally Lipschitz continuous with $f(0)\ge0$ and the zeros set of $f$ is discrete. More recently, there are some results by the author in which $f$ is only locally Lipschitz continuous on $(0,+\infty)$, see \cite{10.1007/s11118-024-10157-1,2024arXiv240900365L}.
	
	When the monotonicity of solutions is in force, one may pass it to the one-dimensional symmetry via the De Giorgi type results which were available in low dimensions (see \cite{MR2483642} for instance). Following this approach, some rigidity results were obtained in \cite{MR2654242} for the case $N=2$, $p>\frac{3}{2}$ and \cite{MR3118616,MR2886112} for the case $N=3$, $p>\frac{8}{5}$ both under the boundedness of solutions and their gradients. For higher dimensions, we refer to Du and Guo \cite{MR2056284}, where the one-dimensional symmetry of bounded solutions to \eqref{main} is obtained under the assumption that
	\begin{enumerate}
		\item[(i)] $f(t)>0$ on $(0,\rho)$ and $f(t)<0$ on $(\rho,+\infty)$ for some $\rho>0$,
		\item[(ii)] $f(t)>c_0 t^{p-1}$ on $(0,t_0)$ for some $c_0,t_0>0$.
	\end{enumerate}
	This extends the main result in \cite{MR1470317} from the case $p=2$ to $p\ne2$.
	Notice that in all these works, the nonlinearity $f$ is positive in the range of solutions $u$.
	The true sign changing nonlinearity was considered recently by the author in \cite{MR4771444} but with the additional assumption that $u(x',x_N)\to1$ as $x_N\to+\infty$ uniformly in $x'\in\mathbb{R}^{N-1}$ and $\frac{2N+2}{N+2}<p\le2$. The method used in \cite{MR4771444} is the sliding method which relies heavily on the assumption of this uniform limit.
	
	In this paper, we will prove a symmetry result for problem \eqref{main} with possibly sign-changing nonlinearity and without assumptions on uniform limits in all dimensions and in the full range $1<p<+\infty$.
	Our result is therefore stronger and covers the one in \cite{MR4771444}.
	Throughout the present paper, we denote
	\[
	F(t)=\int_{0}^{t} f(s) ds
	\]
	for $t>0$. We also denote $Z_f:=\{z\in[0,+\infty) \mid f(z)=0\}$. By $B_r(x_0)$, we mean the open ball in $\mathbb{R}^N$ of center $x_0$ with radius $r>0$. Our main result is the following.
	
	\begin{theorem}\label{th:main}
		Assume $p>1$ and $f:[0,+\infty)\to\mathbb{R}$ is a continuous function which is locally Lipschitz continuous in $[0,+\infty)\setminus Z_f$. Let $u\in C^1(\overline{\mathbb{R}^N_+})$ be a bounded solution to \eqref{main}. Assume that $\rho=\sup_{\mathbb{R}^N_+} u$ verifies $f(\rho)=0$ and $f$ has no zeros in $(\rho-\varepsilon,\rho)$ for some $\varepsilon>0$. Moreover, assume that		
		\begin{equation}\label{smp_cond}
			\liminf_{t\to z^+} \frac{f(t)}{(t-z)^{p-1}} > -\infty,\quad
			\limsup_{t\to z^-} \frac{f(t)}{(z-t)^{p-1}} < +\infty
			\quad\text{ for all } z\in Z_f
		\end{equation}
		and one of the following conditions hold
		\begin{enumerate}
			\item[(i)] $f(0)\ge0$,
			\item[(ii)] $F(\rho)\ne0$,
			\item[(iii)] $u\in C^2(\mathbb{R}^N_+\cap B_r(x_0))$ for some $x_0\in\partial\mathbb{R}^N_+$ and $r>0$.
		\end{enumerate}
		Then $u$ depends only on $x_N$ and monotone increasing in $x_N$. Furthermore,
		\begin{equation}\label{deri}
			\frac{\partial u}{\partial x_N} > 0 \text{ in } \mathbb{R}^N_+.
		\end{equation}
	\end{theorem}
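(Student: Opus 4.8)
The plan is the classical two–step scheme, adapted to the degenerate/singular operator $\Delta_p$: first show that $u$ is monotone in $x_N$, then upgrade monotonicity to one–dimensional symmetry. Throughout, the linear tools of the $p=2$ theory are replaced by the weak and strong comparison principles for $\Delta_p$, whose survival across the zero set $Z_f$ is guaranteed precisely by \eqref{smp_cond}, together with a quantitative analysis of the one–dimensional profile equation $-(|\phi'|^{p-2}\phi')'=f(\phi)$ through its first integral $\tfrac{p-1}{p}|\phi'|^p+F(\phi)=F(\rho)$.

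\emph{Step 1 (monotonicity).} I would prove $\partial u/\partial x_N\ge 0$ in $\mathbb{R}^N_+$ by the moving–plane method applied to the hyperplanes $\{x_N=\lambda\}$, $\lambda>0$: with $u_\lambda(x',x_N)=u(x',2\lambda-x_N)$, one shows $u\le u_\lambda$ on $\Sigma_\lambda=\{0<x_N<\lambda\}$ for every $\lambda>0$, which is equivalent to monotonicity in $x_N$. The procedure can be started for small $\lambda$ because $u>0=u$ on $\partial\mathbb{R}^N_+$ and $u$ is small near the boundary; this is exactly where the dichotomy (i)–(iii) enters — under (i) one compares with a one–dimensional subsolution built from $f(0)\ge 0$ and invokes the Hopf boundary lemma for $\Delta_p$; under (iii) the equation is non–degenerate near the given boundary point so the classical Hopf lemma applies; while (ii) is used to rule out the degenerate boundary behaviour $\phi'(0)=0$ via the Hamiltonian identity. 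The continuation in $\lambda$ rests on the weak comparison principle in unbounded domains (in narrow strips first, then in general) together with \eqref{smp_cond}, which keeps the strong comparison principle alive as $u$ crosses a zero of $f$. The strict inequality \eqref{deri} then follows by applying the strong maximum principle to the (weakly defined) linearized equation satisfied by $\partial u/\partial x_N$, again thanks to \eqref{smp_cond}.

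\emph{Step 2 (symmetry).} By monotonicity and boundedness, $U(x'):=\lim_{x_N\to+\infty}u(x',x_N)$ exists with $0<U\le\rho$, and interior $C^{1,\alpha}$–estimates for $\Delta_p$ show that the vertical translates $u(\cdot\,,\cdot+n)$ converge in $C^1_{\mathrm{loc}}$ to $U$, so $U$ is a bounded solution of $-\Delta_p U=f(U)$ in $\mathbb{R}^{N-1}$ with $\sup U=\rho$ and $f(\rho)=0$. The crucial claim is $U\equiv\rho$: here the hypothesis that $f$ has no zero on $(\rho-\varepsilon,\rho)$ and \eqref{smp_cond} combine, since, writing $V=\rho-U\ge 0$, the equation for $V$ has a right–hand side controlled on $\{0<V<\varepsilon\}$ in the way needed for a strong–maximum–principle/Hamiltonian argument near the zero $z=\rho$, which — together with a translation–compactness step to handle the case where $\inf V$ is ``attained at infinity'' — forces $V\equiv 0$. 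From $U\equiv\rho$, monotone convergence of continuous functions (Dini on expanding balls plus this rigidity) gives $u(x',x_N)\to\rho$ uniformly in $x'$ as $x_N\to+\infty$. One is then in the situation with a uniform limit at infinity, and the one–dimensional symmetry follows by the sliding method in the horizontal directions as in \cite{MR4771444}: for every horizontal unit vector $e'$ and $\tau>0$ one compares $u(x+\tau e')$ with $u(x)$, starting the comparison near $x_N=+\infty$ by the uniform limit and closing it with the strong comparison principle, yielding $u(x+\tau e')=u(x)$; since $e'$ is arbitrary, $u=u(x_N)$. Equivalently, the first integral above produces — when $F(\rho)>0$, which is ensured under each of (i)–(iii) — the unique increasing solution $\phi$ of $-(|\phi'|^{p-2}\phi')'=f(\phi)$ on $(0,\infty)$ with $\phi(0)=0$, $\phi(+\infty)=\rho$, and a two–sided sliding comparison between $u$ and $\phi(x_N)$ gives $u(x',x_N)=\phi(x_N)$.

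\emph{Main obstacle.} The heart of the matter is the behaviour of $u$ at ``$x_N=+\infty$'', where $\Delta_p$ may degenerate (where $\nabla u$ or $u-z$, $z\in Z_f$, vanish) and where no uniform limit is postulated: both the continuation step of the moving planes and the closing of the sliding comparisons can a priori fail through a touching point escaping to infinity. Ruling this out — concretely, proving the rigidity $U\equiv\rho$ and the ensuing uniform convergence $u\to\rho$ — is, I expect, the most delicate part, and it is precisely for this that the structural hypothesis $f(\rho)=0$, the absence of zeros of $f$ near $\rho$, and \eqref{smp_cond} are brought to bear.
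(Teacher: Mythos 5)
Your proposal follows the classical two-step route (moving planes for monotonicity, then sliding to upgrade to one-dimensionality), which is genuinely different from the paper's strategy. The paper never establishes $\partial u/\partial x_N\ge 0$ independently; instead it builds a maximal one-dimensional solution $\overline u$ with $u\le\overline u\le\rho$ (Lemma \ref{lem:upper}, via an iteration of truncated problems in $B_n^+$ and a translation/maximality argument), a minimal one-dimensional solution $\underline u\le u$ with $\sup\underline u=\rho$ (Lemma \ref{lem:lower}, via translation-compactness, the weak sweeping principle of Dancer--Yu, and the existence result in large balls from Proposition \ref{prop:ball}), and then concludes $u=\underline u=\overline u=u_\rho$ by the uniqueness in the ODE classification Theorem \ref{th:1D}. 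Monotonicity of $u$ and the strict inequality \eqref{deri} are read off the explicit profile $u_\rho$ at the very end, not proved first.

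The difficulty with your route is Step~1 itself. In the generality allowed by Theorem \ref{th:main} -- $f$ can change sign, $f$ is only locally Lipschitz away from $Z_f$, and $f(0)<0$ is permitted under (ii) or (iii) -- the moving-plane argument for problem \eqref{main} is not available: even for $p=2$ and $f(0)<0$, the paper points out that monotonicity in the half-space is only known for $N=2$, and for $p\neq 2$ the existing moving-plane results cited in the introduction require $f>0$ on $(0,+\infty)$ (or other restrictive structure on $Z_f$ and on $p$). The dichotomy (i)--(iii) in the statement is not being used to ``get the plane started''; it is used in the paper at a completely different juncture, to establish the first-integral inequality $F(t)<F(\rho)$ for $t\in[0,\rho)$ (see \eqref{s1}) needed to apply Theorem \ref{th:1D} and Proposition \ref{prop:ball}. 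Your continuation step ``weak comparison in narrow strips, then in general, with \eqref{smp_cond} keeping the strong comparison alive'' is exactly the part that is open for sign-changing $f$ and degenerate $p$, so as written the proposal does not close. The secondary rigidity claim $U\equiv\rho$ is also left at the level of a sketch; the paper proves the analogous assertion ($u\ge\rho-\varepsilon$ in $\Sigma_\eta$) with a rather involved combination of the weak sweeping principle, a family of radial sub-solutions produced by Proposition \ref{prop:ball} for a perturbed nonlinearity $g_{\mu,\delta}$, and Theorem \ref{th:1D}, rather than a strong-maximum-principle-at-infinity argument.
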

	
	\begin{remark}		
		Condition \eqref{smp_cond} ensures the strong maximum principle holds at the zeros of $f$. This condition is fulfilled by locally Lipschitz functions $f$ in the case $1<p\le2$. Indeed, in such a case,
		\[
		\frac{|f(t)|}{|t-z|^{p-1}} = \left|\frac{f(t)-f(z)}{t-z}\right| |t-z|^{2-p} \le C |t-z|^{2-p} \le C'
		\]
		for $t$ sufficiently close to $z \in Z_f$.
	\end{remark}
	
	\begin{remark}	
		Theorem \ref{th:main} partially extends a classical result in \cite{MR1260436} to the $p$-Laplacian case. More precisely, in \cite[Theorem 1]{MR1260436}, Berestycki, Caffarelli and Nirenberg proved the same result for $p=2$ without assuming that $f$ has no zeros in a left neighborhood of $\rho$. We need this assumption as well as (i), (ii) or (iii) to deal with the lack of the strong comparison principle and comparison boundary point lemma for the $p$-Laplacian with $p\ne2$.
		
		We also remark that (iii) holds in the case $p=2$ by standard elliptic regularity. Moreover, if $p\ne2$ and $\frac{\partial u}{\partial x_N}(x_0)>0$ for some $x_0\in\partial\mathbb{R}^N_+$, then $\frac{\partial u}{\partial x_N}>0$ in $\mathbb{R}^N_+\cap B_r(x_0)$ for some $r>0$ and (iii) is fulfilled thanks to standard regularity for nondegenerate elliptic operators.
	\end{remark}
	
	To prove Theorem \ref{th:main}, we first study the ODE problem
	\begin{equation}\label{1D}
		\begin{cases}
			-\Delta_p u = f(u) & \text{ in } \mathbb{R}_+,\\
			u \ge 0 & \text{ in } \mathbb{R}_+,\\
			u(0)=0, ~ \|u\|_{L^\infty(\mathbb{R}_+)}<+\infty,
		\end{cases}
	\end{equation}
	where $\mathbb{R}_+=(0,+\infty)$.
	One important ingredient of the proof of Theorem \ref{th:main} is the following classification result for \eqref{1D}, which is of independent interest.
	\begin{theorem}\label{th:1D}
		Assume $p>1$ and $f:[0,+\infty)\to\mathbb{R}$ is a continuous function which is locally Lipschitz continuous in $[0,+\infty)\setminus Z_f$ such that \eqref{smp_cond} holds. Then we have the following claims:
		\begin{enumerate}
			\item[(i)] For each $\rho\in Z_f^*:=\{z\in Z_f \mid F(t) < F(z) \text{ for all } t\in [0,z)\} \cup Z_f^0$, where			
			\[
			Z_f^0=
			\begin{cases}
				\{z\in Z_f \mid F(t) < 0 = F(z) \text{ for all } t\in (0,z)\} &\text{ if } f(0) < 0,\\
				\emptyset &\text{ otherwise},
			\end{cases}
			\]		
			there exists a unique solution $u_\rho$ to \eqref{1D} such that $u_\rho'>0$ in $\mathbb{R}_+$, $\lim\limits_{t\to+\infty} u_\rho(t)=\rho$ and $u_\rho'(0)=\left(\frac{p}{p-1}F(\rho)\right)^\frac{1}{p}$.
			\item[(ii)] If $f(0)<0$, then for
			\[
			\rho\in P_f := \{z>0 \mid F(t) < 0 = F(z) \text{ for all } t\in (0,z) \text{ and } f(z)>0\},
			\]
			there exists a unique solution $\tilde u_\rho$ to \eqref{1D} such that $\tilde u_\rho'>0$ in $(0,t^*)$, $\tilde u_\rho'(0)=\tilde u_\rho'(t^*)=0$, $\tilde u_\rho(t^*)=\rho$ and
			\[
			\tilde u_\rho(t+2kt^*)=\tilde u_\rho(t), \quad \tilde u_\rho(t+(2k+1)t^*)=\tilde u_\rho(t^*-t)
			\]
			for all $t\in[0,t^*]$ and all $k\in\mathbb{N}$,
			where
			\[
			t^* = \left(\frac{p-1}{p}\right)^\frac{1}{p}\int_{0}^{\rho} \frac{ds}{\left[F(\rho)-F(s)\right]^\frac{1}{p}}.
			\]
		\end{enumerate}
		
		Moreover, every solution $u \not\equiv 0$ to \eqref{1D} must have one of the above two forms.
	\end{theorem}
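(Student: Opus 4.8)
The plan is to reduce \eqref{1D} to a conservative first-order equation, build the two advertised families explicitly, and then classify all solutions by a phase-plane argument. The starting point is an energy identity: a bounded weak solution of the one-dimensional equation $-(|u'|^{p-2}u')'=f(u)$ is, by standard regularity, $C^1$ with $|u'|^{p-2}u'\in C^1$, and multiplying by $u'$ and using $\frac{d}{dt}\big(\frac{p-1}{p}|u'|^p\big)=u'(|u'|^{p-2}u')'$ gives $\frac{p-1}{p}|u'|^p+F(u)\equiv E$ along any solution. The condition $u(0)=0$ forces $E=\frac{p-1}{p}|u'(0)|^p\ge0$, and, $u\ge0$ attaining its minimum at $t=0$, also $u'(0)\ge0$. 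Hence on any interval where $u'$ keeps a constant sign one has $u'=\pm\big(\frac{p}{p-1}\big)^{1/p}(E-F(u))^{1/p}$, a separable equation, and $F\le E$ all along the orbit.

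To construct the families: for $\rho\in Z_f^*$ put $E:=F(\rho)$ (so $E>0$ in the first part of $Z_f^*$ and $E=0$ on $Z_f^0$), and note $F(t)<F(\rho)=E$ on $(0,\rho)$. Then $G(u):=\int_0^u(E-F(s))^{-1/p}\,ds$ is strictly increasing and $C^1$ on $[0,\rho)$, finite at $0$ (even when $E=0$, since then $f(0)<0$ and $F$ has a simple zero at $0$), with $G(\rho^-)=+\infty$: indeed the $\limsup$ part of \eqref{smp_cond} at $z=\rho$ gives $f(s)\le C(\rho-s)^{p-1}$ near $\rho$, hence $F(\rho)-F(s)\le\frac{C}{p}(\rho-s)^p$ and the integrand of $G$ is $\ge c(\rho-s)^{-1}$. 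Thus $u_\rho:=G^{-1}\big(\big(\frac{p}{p-1}\big)^{1/p}\,\cdot\,\big)$ is defined on $[0,+\infty)$, is increasing, tends to $\rho$, solves \eqref{1D}, and has $u_\rho'(0)=\big(\frac{p}{p-1}F(\rho)\big)^{1/p}$; conversely an increasing solution with these properties has energy $E$, hence obeys the same separable equation with the same initial value, and the finiteness of $G$ at $0$ fixes the integration constant even when $E=0$, so $u_\rho$ is unique. For $\rho\in P_f$ (so $f(0)<0$) one has $E=0$, $F(t)<0=F(\rho)$ on $(0,\rho)$ and $f(0)<0<f(\rho)$; then $F$ has simple zeros at $0$ and $\rho$, so $t^*<+\infty$, and I would define $\tilde u_\rho$ on $[0,t^*]$ by the same separable equation (increasing from $0$ to $\rho$, with $\tilde u_\rho'(0)=\tilde u_\rho'(t^*)=0$) and extend it by repeated reflection through the levels $0$ and $\rho$. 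Using $(-F(\tilde u_\rho))^{-1/p}\tilde u_\rho'\equiv\big(\frac{p}{p-1}\big)^{1/p}$ on $(0,t^*)$ one verifies $\tilde u_\rho\in C^1$, $|\tilde u_\rho'|^{p-2}\tilde u_\rho'\in C^1$ with the correct derivative at the turning points $t=kt^*$, so $\tilde u_\rho$ solves \eqref{1D}; its uniqueness holds since the orbit is determined once $\tilde u_\rho>0$ and the passage through the turning levels $0$ and $\rho$ (where $f\ne0$) is unique.

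Finally I would classify an arbitrary $u\not\equiv0$ solving \eqref{1D}. If $u'(0)=0$ then $E=0$ and $F(u)\le0$ throughout; reading the equation at $t=0$ excludes $f(0)>0$ (else $u$ would drop below $0$) and $f(0)=0$ (else $(0,0)$ is an equilibrium and \eqref{smp_cond} at $z=0$, through the strong maximum principle, forces $u\equiv0$), so $f(0)<0$. In every case $u$ increases on a maximal interval $(0,T)$, with $\rho:=\lim_{t\to T^-}u(t)<+\infty$, $F<E$ on $(0,\rho)$, and $F(\rho)=E$. If $T=+\infty$ then $u\to\rho$, $u'\to0$ and $f(\rho)=0$ (a nonzero $f(\rho)$ would be positive, make $\int^\rho(F(\rho)-F)^{-1/p}$ convergent, and hence $\rho$ be reached in finite time), so $\rho\in Z_f^*$ and $u=u_\rho$. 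If $T<+\infty$ then $u'(T)=0$ and, by the $\limsup$ part of \eqref{smp_cond} at $z=\rho$, $f(\rho)\ne0$ (a zero would force infinite time), so $f(\rho)>0$ and $u$ turns around at $\rho$; when $E>0$ this is impossible, for $u$ then returns to level $0$ with $u'=-\big(\frac{p}{p-1}E\big)^{1/p}<0$, contradicting $u\ge0$; hence $E=0$, $f(0)<0$, $\rho\in P_f$, and matching orbits gives $u=\tilde u_\rho$. This exhausts all cases.

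The step I expect to be the main obstacle is the uniqueness input used in the classification. Uniqueness at an equilibrium level $z\in Z_f$ is not automatic for the $p$-Laplacian and must be extracted from \eqref{smp_cond} via a V\'azquez--Pucci--Serrin type strong maximum principle; uniqueness of the passage through a turning point — where $u'=0$ but $f(u)\ne0$, a genuine issue only for $p>2$, when $s\mapsto|s|^{p-2}s$ fails to be Lipschitz at $0$ — has to be recovered from the transversal vanishing of the conjugate momentum $|u'|^{p-2}u'$. One must also check that the integrals $G$ and $t^*$ converge or diverge exactly as \eqref{smp_cond} and the simplicity of the relevant zeros of $F$ dictate.
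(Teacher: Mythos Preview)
Your approach is correct and is essentially the paper's own: both rest on the conserved energy $\frac{p-1}{p}|u'|^p+F(u)\equiv E$ and the separable first-order equation it yields, construct $u_\rho$ and $\tilde u_\rho$ by inverting the integral $G$, and classify by whether the maximal increasing interval is bounded. The only packaging difference is that the paper isolates the ``turning around'' behavior into two preliminary reflection lemmas (if $u'\ne0$ on $(t_0,t_1)$ and $u'$ vanishes at one endpoint, then $u$ is symmetric about that endpoint), proving along the way that $f(u(t_1))>0$ via Hopf's lemma and a monotonicity argument, whereas you deduce the same sign via the divergence/convergence of $G$ near a zero of $f$ using \eqref{smp_cond}; both routes are valid and of comparable length. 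You have correctly flagged the one genuinely delicate point---uniqueness through a turning level where $u'=0$ but $f(u)\ne0$, an issue only for $p>2$---and the resolution you indicate (work with the conjugate momentum $|u'|^{p-2}u'$, whose derivative $-f(u)$ is nonzero there) is exactly what is needed.
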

	
	\begin{remark}
		Dancer, Yu and Efendiev \cite{MR3058211} classified all solutions to \eqref{1D} under restriction $f(0)\ge0$. Here we drop that restriction in our classification result. Unlike the case $f(0)\ge0$, when $f(0)<0$, solutions $u$ with $u'(0)=0$ may exist and they may not strictly positive. This is due to the lack of the strong maximum principle and Hopf's lemma in the case $f(0)<0$. Moreover, one easily checks that $Z_f^0$ and $P_f$ each has at most one element. Hence Theorem \ref{th:1D} indicates that \eqref{1D} admits at most one positive solution $u$ with $u'(0)=0$ and at most one periodic solution and such solutions do not exist in the case $f(0)\ge0$.
		One well-known example in higher dimensions is given by the problem
		\[
		\begin{cases}
			-\Delta u = u-1 &\text{ in } \mathbb{R}^N_+,\\
			u = 0 &\text{ on } \partial\mathbb{R}^N_+,
		\end{cases}
		\]
		This problem has a unique nonnegative solution $u(x)=1-\cos x_N$, which is bounded, nonnegative, one-dimensional and periodic in $x_N$. This elegant result was conjectured by Berestycki, Caffarelli and Nirenberg \cite{MR1655510} and was proved in \cite{MR3593525} for dimension two and in \cite{MR3550849} for higher dimensions.
	\end{remark}
	
	As an application of Theorem \ref{th:main}, we have the following.
	\begin{theorem}\label{th:sub}
		Assume $p>1$. If $N>p+1$, we further assume $N(p-2)+2\ge0$. Let $f:[0,+\infty)\to\mathbb{R}$ be a locally Lipschitz continuous function such that
		\begin{enumerate}
			\item[(i)] for some $\rho>0$,
			\[
			f(t)>0 \text{ on } (0,\rho),
			\quad f(t)<0 \text{ on } (\rho,+\infty),
			\]
			\item[(ii)] $f$ satisfies \eqref{smp_cond} at $z=\rho$, i.e.,
			\[
			\liminf_{t\to \rho^+} \frac{f(t)}{(t-\rho)^{p-1}} > -\infty,\quad
			\limsup_{t\to \rho^-} \frac{f(t)}{(\rho-t)^{p-1}} < +\infty,
			\]
			\item[(iii)] there exist $c_0,t_0>0$ such that
			\[
			f(t) > c_0 t^\gamma \text{ for all } t \in (0, t_0),
			\]
		\end{enumerate}
		where
		\begin{align*}
			\gamma&=\frac{(p-1)(N-1)}{N-p-1} \text{ if } N>p+1,\\
			\gamma&\ge1 \text{ is arbitrary if } N\le p+1.
		\end{align*}
		If $p\ge2$, we further assume that
		\[
		\text{ either }\lim_{t\to0^+} \frac{f(t)}{t^{p-1}} = 0 \text{ or } \liminf_{t\to0^+} \frac{f(t)}{t^{p-1}} > 0.
		\] 
		Then any bounded solution $u$ of \eqref{main} depends only on $x_N$ and is monotone increasing in $x_N$. Furthermore,
		\[
		\frac{\partial u}{\partial x_N} > 0 \text{ in } \overline{\mathbb{R}^N_+}.
		\]
	\end{theorem}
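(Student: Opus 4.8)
\emph{Proof strategy.} The plan is to reduce Theorem~\ref{th:sub} to Theorem~\ref{th:main}: the only substantial point will be to show that $M:=\sup_{\mathbb{R}^N_+}u$ coincides with $\rho$, the unique positive zero of $f$ (note $f(\rho)=0$ by continuity and (i)). First I would record the standard facts that a bounded solution $u$ of \eqref{main} has globally bounded gradient and belongs to $C^{1}(\overline{\mathbb{R}^N_+})$ (interior and flat-boundary $C^{1,\alpha}$ regularity for quasilinear equations with bounded right-hand side $f(u)$), so Theorem~\ref{th:main} is applicable once its hypotheses are checked. Those hypotheses are cheap here: $Z_f=\{0,\rho\}$ by (i); \eqref{smp_cond} at $z=\rho$ is exactly (ii), while at $z=0$ it holds because $f>0$ near $0$ forces $\liminf_{t\to0^+}f(t)/t^{p-1}\ge0$ (the left limit being vacuous); and (iii) forces $f(0)=\lim_{t\to0^+}f(t)\ge0$, which is condition (i) of Theorem~\ref{th:main}.

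Next I would prove $M\le\rho$. Assuming $M>\rho$, take $x_k$ with $u(x_k)\to M$ and translate $u$ horizontally — and, along a subsequence with $x_{k,N}\to+\infty$, also vertically — so that the uniform $C^{1,\alpha}$ bounds yield a limit $v$ solving $-\Delta_p v=f(v)$ in $\mathbb{R}^N$ or in $\mathbb{R}^N_+$ (with zero boundary data), with $0<v\le M$ and $v(\bar x)=M$ at some point $\bar x$; since $M>0$, $\bar x$ is an interior point. Setting $w=M-v\ge0$ one has $-\Delta_p w=-f(M-w)$, which is $\ge\delta>0$ on a small ball $B\ni\bar x$ because $f(M)<0$; comparing $w$ on $B$ with the explicit positive solution $\psi$ of $-\Delta_p\psi=\delta$ in $B$, $\psi=0$ on $\partial B$ (weak comparison principle for $\Delta_p$) gives $0=w(\bar x)\ge\psi(\bar x)>0$, a contradiction. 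Hence $M\le\rho$.

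The heart of the proof — and the step I expect to be the main obstacle — is to prove $M\ge\rho$. Suppose $M<\rho$. Then $f>0$ on the whole range $(0,M]$ of $u$, so after modifying $f$ above $M$ (which changes nothing since $u\le M$) we may assume $f>0$ on $(0,+\infty)$; combined with the bounded gradient of $u$, the quasilinear moving-plane monotonicity in a half-space (\cite{MR3303939,MR2886112} for $1<p<2$, \cite{MR3752525,MR3118616} for $p>2$) then gives that $u$ is nondecreasing in $x_N$ — this is precisely where $N(p-2)+2\ge0$ and, for $p\ge2$, the dichotomy on $f(t)/t^{p-1}$ near $0$ are needed to fit the hypotheses of those results. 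For each fixed $x'$ the limit $g(x'):=\lim_{x_N\to+\infty}u(x',x_N)\in(0,M]$ therefore exists, and translating $u$ in $x_N$ and passing to the limit (again via the $C^{1,\alpha}$ estimates) shows the limiting solution is independent of $x_N$ and equals $g$, so $g$ is a bounded positive solution of $-\Delta_p g=f(g)$ in $\mathbb{R}^{N-1}$; by (iii) and $g\le M$ this yields $-\Delta_p g\ge c_1 g^\gamma$ in $\mathbb{R}^{N-1}$ for some $c_1>0$. The whole point of the choice of $\gamma$ is that it is the Serrin--Zou critical exponent of $\mathbb{R}^{N-1}$ when $N-1>p$ and is unrestricted when $N-1\le p$; hence a Serrin--Zou-type Liouville theorem for positive supersolutions of $-\Delta_p z=c_1 z^\gamma$ in $\mathbb{R}^{N-1}$ forbids such a $g$, a contradiction. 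Therefore $M\ge\rho$, and with the previous step $M=\rho$.

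Finally, with $\sup u=\rho$, $f(\rho)=0$, $f$ having no zeros in a left neighborhood of $\rho$ (by (i)), \eqref{smp_cond} verified above, and $f(0)\ge0$, Theorem~\ref{th:main}(i) applies and yields that $u$ depends only on $x_N$, is increasing, with $\partial u/\partial x_N>0$ in $\mathbb{R}^N_+$. Writing $u(x)=\phi(x_N)$, $\phi$ solves \eqref{1D} with $\phi'>0$ on $\mathbb{R}_+$ and $\lim_{t\to+\infty}\phi(t)=\rho$, so by Theorem~\ref{th:1D} necessarily $\phi=u_\rho$ and $\phi'(0)=\left(\frac{p}{p-1}F(\rho)\right)^{1/p}$. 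Since $f>0$ on $(0,\rho)$, $F(\rho)=\int_0^\rho f(s)\,ds>0$, whence $\phi'(0)>0$; combined with $\phi'>0$ on $\mathbb{R}_+$ this gives $\partial u/\partial x_N>0$ on all of $\overline{\mathbb{R}^N_+}$, completing the proof.
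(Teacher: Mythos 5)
Your proposal is correct and follows essentially the same route as the paper: reduce to Theorem~\ref{th:main} by establishing $\sup_{\mathbb{R}^N_+}u=\rho$, which is obtained from the a priori bound $u\le\rho$, the moving-plane monotonicity results of \cite{MR3303939,MR3752525,MR2886112,MR3118616}, passage to the limit profile $v(x')=\lim_{x_N\to+\infty}u(x',x_N)$, and a Liouville theorem for $-\Delta_p v\ge c_1v^\gamma$ in $\mathbb{R}^{N-1}$ (the paper invokes Mitidieri--Pokhozhaev, you invoke the same exponent via Serrin--Zou; this is the same mechanism). There are three small implementation differences worth noting: (a) you give a self-contained translation-plus-comparison argument for $u\le\rho$, where the paper simply cites \cite[Theorem 1.7]{MR2886112}; (b) you invoke the moving-plane machinery only under the contradiction hypothesis $M<\rho$ (after truncating $f$ above $M$), whereas the paper applies it unconditionally by observing that $u$ is bounded away from $\rho$ in each strip $\Sigma_\lambda$ so that $f>0$ on the relevant range there; and (c) for strict positivity of $\partial u/\partial x_N$ on $\partial\mathbb{R}^N_+$ you read off $u_\rho'(0)=\left(\tfrac{p}{p-1}F(\rho)\right)^{1/p}>0$ from Theorem~\ref{th:1D}, whereas the paper invokes Hopf's lemma (Theorem~\ref{th:smp}) using $f(0)\ge0$. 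All three are valid alternatives and do not change the substance of the argument.
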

	
	Due to assumption (i), parameter $\gamma$ in assumption (iii) must satisfy $\gamma\ge1$. Indeed, we can check that $\frac{(p-1)(N-1)}{N-p-1}\ge1$ if $N>p+1$ and $N(p-2)+2\ge0$.
	
	\begin{remark}
		Theorem \ref{th:sub} extends corresponding results in \cite{MR1470317,MR2002396} from $p=2$ to $p\ne2$. We also recall that a similar result was obtained by Du and Guo in \cite{MR2056284} for $p>1$ and $\gamma=p-1$. Du and Guo also conjectured that the same rigidity result is still valid if (iii) only holds for some $\gamma>p-1$. Hence Theorem \ref{th:sub} partially answers that open question left in \cite{MR2056284}.
	\end{remark}
	
	The rest of this paper is organized as follows. In Section \ref{sect2}, we recall some important analytic tools that will be used later. We prove Theorem \ref{th:1D} in Section \ref{sect3}. In Section \ref{sect4} we prove an existence result in balls. Section \ref{sect5} is devoted to the proofs of our main results, which are Theorems \ref{th:main} and \ref{th:sub}.
	
	\section{Preliminaries}\label{sect2}
	In this section, we recall some well-known tools which will be used later.
	The following strong maximum principle and Hopf's lemma are due to V\'{a}zquez \cite{MR768629}.
	\begin{theorem}[Strong maximum principle and Hopf's lemma]\label{th:smp}
		Let $u\in C^1(\Omega)$ be a nonnegative weak solution to
		\[
		-\Delta_p u + cu^q = g \ge 0 \quad\text{ in }\Omega,
		\]
		where $p>1$, $q\ge p-1$, $c\ge0$, $\Omega$ is a connected domain of $\mathbb{R}^N$ and $g \in L^\infty_{\rm loc}(\Omega)$. If $u \not\equiv 0$ then $u > 0$ in $\Omega$. Moreover for any point $x_0 \in \partial \Omega$ where the interior sphere condition is satisfied, and such that $u\in C^1(\Omega\cup\{x_0\})$ and $u(x_0) = 0$ we have that $\frac{\partial u}{\partial \nu}>0$ for any inward directional
		derivative (this means that if $x$ approaches $x_0$ in a ball $B\subset\Omega$
		that has $x_0$ on its boundary, then $\lim\limits_{x\to x_0} \frac{u(x)-u(x_0)}{|x-x_0|}>0$).
	\end{theorem}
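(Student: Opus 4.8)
The plan is to run the classical barrier-and-comparison argument of V\'azquez. First I would reduce to a clean differential inequality: since $u\in C^1$ is locally bounded, $c\ge0$ and $g\ge0$, on every ball $B$ with $\overline B\subset\Omega$ one has $-\Delta_p u=g-cu^q\ge-cu^q\ge-C\,u^{p-1}$ in $B$, where $C=C(B)$ depends on $\|u\|_{L^\infty(\overline B)}$ and we used $q\ge p-1$. Hence it suffices to treat a nonnegative $u\in C^1$ satisfying $\Delta_p u\le C u^{p-1}$ on a ball.

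For the strong maximum principle, assume $u\not\equiv0$ but $u$ vanishes somewhere in $\Omega$. Since $\Omega$ is connected and $\{u>0\}$ is a nonempty proper open subset, $\partial\{u>0\}\cap\Omega\ne\emptyset$; picking an interior point $x_1$ with $u(x_1)>0$ sufficiently close to this free boundary and $R=\operatorname{dist}(x_1,\{u=0\})$, one obtains a ball with $\overline{B_R(x_1)}\subset\Omega$, $B_R(x_1)\subset\{u>0\}$ and a point $y_0\in\partial B_R(x_1)$ with $u(y_0)=0$. On a thin annulus $\mathcal A=B_R(x_1)\setminus\overline{B_{R-\delta}(x_1)}$ we have $u\ge m>0$ on the inner sphere. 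The core step is to construct an explicit radial subsolution $w(x)=\phi(|x-x_1|)$ of $\Delta_p w\ge C w^{p-1}$ in $\mathcal A$ with $\phi(R)=0$, $\phi>0$ and strictly decreasing on $[R-\delta,R)$, $\phi(R-\delta)\le m$, and, crucially, $\phi'(R)<0$. Writing $\Delta_p w=-(p-1)\psi^{p-2}\psi'-\tfrac{N-1}{r}\psi^{p-1}$ with $\psi=-\phi'>0$, the choice $\psi(r)=\psi_R+A(R-r)$ with $A$ a suitable multiple of $\psi_R/R$ works once $\delta$ is small enough: near $r=R$ the function $w\sim\psi_R(R-r)$ is small, so the term $(p-1)\psi^{p-2}A$ can be made to absorb both the curvature term $\tfrac{N-1}{r}\psi^{p-1}$ and $Cw^{p-1}$, the latter precisely because $q\ge p-1$ (equivalently because $\int_0^\delta(s\beta(s))^{-1/p}\,ds=+\infty$ for $\beta(s)=Cs^{p-1}$). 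Then the weak comparison principle for the monotone operator $v\mapsto-\Delta_p v+Cv^{p-1}$ — test the difference of the weak formulations of $w$ and $u$ with $(w-u)^+\in W^{1,p}_0(\mathcal A)$ and use the monotonicity inequality for $\xi\mapsto|\xi|^{p-2}\xi$ together with monotonicity of $v\mapsto Cv^{p-1}$ — yields $w\le u$ in $\mathcal A$. Since $u(y_0)=0=\phi(R)$ and $\phi'(R)<0$, comparing $u$ and $w$ along the inward normal at $y_0$ produces a strictly positive directional derivative of $u$ there, so $\nabla u(y_0)\ne0$; this contradicts $u\in C^1(\Omega)$ attaining an interior minimum (of value $0$) at $y_0$. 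Hence $u>0$ in $\Omega$.

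For Hopf's lemma the same barrier is applied at a boundary point: the interior sphere condition at $x_0$ provides a ball $B_R(x_1)\subset\Omega$ with $x_0\in\partial B_R(x_1)$, on which $u>0$ by the part just proved and $u(x_0)=0$. Building $w$ on $B_R(x_1)\setminus\overline{B_{R-\delta}(x_1)}$ as above, comparison gives $w\le u$ there; since $\phi$ vanishes linearly at the outer sphere with $\phi'(R)<0$, approaching $x_0$ along the inward normal we get $\liminf_{x\to x_0}\frac{u(x)-u(x_0)}{|x-x_0|}\ge-\phi'(R)>0$, and the assumed $C^1$-regularity up to $x_0$ promotes this liminf to the limit in the stated form.

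The main obstacle is the barrier construction: one has to produce a radial subsolution of $\Delta_p w\ge Cw^{p-1}$ that genuinely reaches $0$ with nonzero slope, which is exactly where the hypothesis $q\ge p-1$ enters, and where the singular/degenerate nature of $-\Delta_p$ for $p\ne2$ forces care. The comparison step itself is painless — the vector monotonicity inequality never divides by $|\nabla w|$ — but checking the differential inequality for $w$ near the outer sphere, where $\psi^{p-2}$ either blows up or degenerates, requires the annulus $\mathcal A$ to be taken thin enough.
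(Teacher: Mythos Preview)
The paper does not give its own proof of this theorem: it is stated in the Preliminaries as a tool and attributed directly to V\'azquez \cite{MR768629}, with no argument supplied. Your sketch is precisely the V\'azquez barrier-and-comparison scheme that the citation points to --- reduction to $\Delta_p u\le Cu^{p-1}$ via $q\ge p-1$, radial subsolution on an annulus with nonzero outer slope, weak comparison for the monotone operator $-\Delta_p+C(\cdot)^{p-1}$, and the contradiction at an interior zero (resp.\ the derivative bound at a boundary zero) --- and is correct at the level of a sketch. One small comment: your linear ansatz $\psi(r)=\psi_R+A(R-r)$ does work, but you should make explicit that $A$ is chosen proportional to $\psi_R/R$ first (so the curvature term is beaten uniformly in $\psi_R$), then $\delta$ is fixed small to absorb $Cw^{p-1}$, and only then is $\psi_R$ scaled down so that $\phi(R-\delta)\le m$; the order matters for the estimates to close.
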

	
	In the quasilinear case, the maximum principle is not equivalent to the comparison one. The strong comparison principle for $p$-Laplace equations does not hold in general. However, it is known that the strong comparison principle for $p$-Laplacian holds outside the critical set. This result, which is due to Damascelli \cite{MR1632933}, is enough for our purpose.
	\begin{theorem}[Strong comparison principle]\label{th:scp}
		Let $u,v\in C^1(\Omega)$ be two solutions to
		\[
		-\Delta_p w = f(w) \quad\text{ in } \Omega
		\]
		such that $u \le v$ in $\Omega$, with $p>1$ and let
		\[
		Z=\{x\in\Omega \mid |\nabla u(x)|+|\nabla v(x)|=0\}.
		\]
		If $x_0\in\Omega\setminus Z$ and $u(x_0)=v(x_0)$, then $u=v$ in the connected component of $\Omega\setminus Z$ containing $x_0$.
	\end{theorem}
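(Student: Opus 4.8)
The plan is to show that the coincidence set $\Gamma:=\{x\in\mathcal{C}\mid u(x)=v(x)\}$ is both relatively closed and relatively open in $\mathcal{C}$, where $\mathcal{C}$ denotes the connected component of $\Omega\setminus Z$ containing $x_0$; since $\Gamma$ is plainly relatively closed and nonempty ($x_0\in\Gamma$), the conclusion follows by connectedness once openness is established. Write $w:=v-u\ge0$ in $\Omega$. The key observation is that at any $\bar x\in\Gamma$ the function $w$ attains its global minimum value $0$, and since $w\in C^1(\Omega)$ this forces $\nabla w(\bar x)=0$, i.e.\ $\nabla u(\bar x)=\nabla v(\bar x)$; because $\bar x\notin Z$, this common gradient is nonzero, say $|\nabla u(\bar x)|=|\nabla v(\bar x)|=m>0$.

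To prove openness, fix $\bar x\in\Gamma$ and linearise: by the fundamental theorem of calculus,
\[
|\nabla v|^{p-2}\nabla v-|\nabla u|^{p-2}\nabla u=A(x)\nabla w,\qquad
A(x)=\int_0^1\!\Big(|\xi_t|^{p-2}I+(p-2)|\xi_t|^{p-4}\,\xi_t\otimes\xi_t\Big)\,dt,
\]
where $\xi_t(x):=(1-t)\nabla u(x)+t\nabla v(x)$. Since $\xi_t(\bar x)=\nabla u(\bar x)\ne0$ for every $t\in[0,1]$, continuity of $\nabla u,\nabla v$ furnishes a ball $U=B_r(\bar x)\subset\mathcal{C}$ on which $0<m/2\le|\xi_t(x)|\le2m$ for all $(x,t)\in U\times[0,1]$. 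Consequently $A$ is continuous on $U$, and separating the cases $p\ge2$ and $1<p<2$ and using $0\le(\xi_t\cdot\eta)^2\le|\xi_t|^2|\eta|^2$ gives
\[
\min(1,p-1)\,|\eta|^2\int_0^1|\xi_t|^{p-2}\,dt\;\le\;A(x)\eta\cdot\eta\;\le\;\max(1,p-1)\,|\eta|^2\int_0^1|\xi_t|^{p-2}\,dt,
\]
which, together with $m/2\le|\xi_t|\le2m$, shows that $A$ is uniformly elliptic with bounded coefficients on $U$. Subtracting the two equations, $w\ge0$ is a weak solution in $U$ of $-\operatorname{div}\!\big(A(x)\nabla w\big)=f(v)-f(u)=c(x)\,w$ with $c\in L^\infty(U)$ (this is where local Lipschitz continuity of $f$ on the compact range of $u$ and $v$ over $\overline U$ is used).

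Rewriting this as $-\operatorname{div}\!\big(A(x)\nabla w\big)+\|c\|_{L^\infty(U)}\,w=\big(\|c\|_{L^\infty(U)}+c(x)\big)w\ge0$, we see that $w$ is a nonnegative supersolution in $U$ of a uniformly elliptic operator with bounded measurable coefficients and nonnegative zeroth-order term, attaining its minimum value $0$ at the interior point $\bar x$. The classical strong maximum principle for linear uniformly elliptic operators then yields $w\equiv0$ in $U$, so $U\subseteq\Gamma$; hence $\Gamma$ is open, and $\Gamma=\mathcal{C}$, i.e.\ $u=v$ throughout $\mathcal{C}$. I expect the only genuinely delicate point to be the uniform ellipticity of $A$ on a full neighbourhood of $\bar x$: it rests entirely on the equality $\nabla u(\bar x)=\nabla v(\bar x)\ne0$ at coincidence points — forced by the interior-minimum structure of $w$ — which keeps the whole segment $\{\xi_t\}_{t\in[0,1]}$ away from the origin near $\bar x$. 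Had we known only that $|\nabla u(\bar x)|+|\nabla v(\bar x)|>0$, the segment could pass through $0$, and for $1<p<2$ the matrix $A$ (still defined a.e.) would lose its lower ellipticity bound there, so the linearisation argument would collapse; this is exactly why the critical set $Z$ must be removed and why the strong comparison principle fails on $Z$ in general. Everything else is routine.
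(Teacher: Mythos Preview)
The paper does not prove this theorem: it is quoted in the preliminaries section as a known result due to Damascelli \cite{MR1632933}, with no argument supplied. Your proposal is therefore not being compared to anything in the paper itself.

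That said, your argument is correct and is essentially the classical linearisation proof that appears in Damascelli's work. The crucial step---that at a coincidence point $\bar x\in\Omega\setminus Z$ the gradients of $u$ and $v$ must agree (by the interior-minimum condition on $w=v-u$) and hence are \emph{both} nonzero---is exactly what makes the integrand $|\xi_t|^{p-2}I+(p-2)|\xi_t|^{p-4}\xi_t\otimes\xi_t$ uniformly bounded above and below on a neighbourhood, yielding a uniformly elliptic linear operator to which the standard strong maximum principle applies. Your closing remark that mere positivity of $|\nabla u(\bar x)|+|\nabla v(\bar x)|$ would not suffice (the segment $\xi_t$ could hit the origin) is precisely the reason the result is stated only on $\Omega\setminus Z$. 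One implicit hypothesis you invoke is local Lipschitz continuity of $f$ on the relevant range, needed to write $f(v)-f(u)=c(x)w$ with $c\in L^\infty$; this is not written in the theorem statement as the paper records it, but it is the standing assumption in Damascelli's original formulation and in every application the paper makes of the result.
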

	
	In companion with the strong comparison principle is the following boundary point lemma \cite[Theorem 2.7.1]{MR2356201}, which holds for $C^2$ solutions.
	\begin{theorem}[Boundary point lemma]\label{lem:bpm}
		Let $u,v\in C^1(\overline{\Omega})\cap C^2(\Omega)$ be solutions of the equation
		\[
		-\Delta_p w = f(w) \quad\text{ in }\Omega
		\]
		such that $u<v$ in $\Omega$ and $u=v$ at some point $x_0 \in \partial \Omega$ where the interior sphere condition is satisfied. Then
		\[
		\frac{\partial u}{\partial \nu}(x_0)<\frac{\partial v}{\partial \nu}(x_0),
		\]
		where $\nu$ is the inward normal at $x_0$.
	\end{theorem}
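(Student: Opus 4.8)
The plan is to subtract the two equations, recognize that the difference $w=v-u$ solves a \emph{linear} second-order elliptic equation which is uniformly elliptic near $x_0$, and then invoke the classical Hopf boundary point lemma for linear operators.

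First I would set $w=v-u$, so that $w\in C^1(\overline\Omega)\cap C^2(\Omega)$, $w>0$ in $\Omega$ and $w(x_0)=0$; the claim is equivalent to $\frac{\partial w}{\partial\nu}(x_0)>0$. Write $A(\xi)=|\xi|^{p-2}\xi$, so its differential $DA(\xi)$ has entries $a_{ij}(\xi)=|\xi|^{p-2}\bigl(\delta_{ij}+(p-2)\xi_i\xi_j|\xi|^{-2}\bigr)$. Subtracting the equations $-\operatorname{div}(A(\nabla u))=f(u)$ and $-\operatorname{div}(A(\nabla v))=f(v)$ and using the identity $A(\nabla v)-A(\nabla u)=M(x)\nabla w$ with $M(x):=\int_0^1 DA\bigl((1-t)\nabla u(x)+t\nabla v(x)\bigr)\,dt$, one finds that, in the weak sense,
\[
-\operatorname{div}\bigl(M(x)\nabla w\bigr)=c(x)\,w\quad\text{ in }\Omega,
\]
where $c(x):=\dfrac{f(v(x))-f(u(x))}{v(x)-u(x)}$ is well defined since $w>0$ in $\Omega$, and $|c|\le L$ on any region where $u$ and $v$ take values in a fixed compact interval on which $f$ is Lipschitz. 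In particular $w$ is a positive weak supersolution of the linear operator $\psi\mapsto-\operatorname{div}(M\nabla\psi)+L\psi$.

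It remains to check that this operator is uniformly elliptic near $x_0$. The conclusion can only hold when $\nabla v(x_0)\neq\nabla u(x_0)$; relabeling if necessary we may assume $\nabla v(x_0)\neq0$, so that $|\nabla v|\ge\delta>0$ on a small ball $B\ni x_0$. Since $u\in C^1(\overline\Omega)$, on $B$ the segment $(1-t)\nabla u(x)+t\nabla v(x)$ has length at least $\delta/2$ for all $t$ in a fixed interval $[1-\eta,1]$, which bounds the eigenvalues of $M(x)$ below by a positive constant; on the other hand $p>1$, i.e. $p-2>-1$, keeps $\int_0^1|(1-t)\nabla u(x)+t\nabla v(x)|^{p-2}\,dt$ finite and locally bounded — even where the segment meets the origin — so the eigenvalues of $M(x)$ are bounded above and $M$ is continuous on $B$. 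Hence $w$ is a positive weak supersolution of a linear, uniformly elliptic, divergence-form operator with continuous coefficients on $B\cap\Omega$, it vanishes at $x_0\in\partial\Omega$, and $\partial\Omega$ satisfies the interior sphere condition there; the classical Hopf boundary point lemma — applicable with no sign restriction on the zeroth-order coefficient because $w(x_0)=0$ — then gives $\frac{\partial w}{\partial\nu}(x_0)>0$, that is, $\frac{\partial u}{\partial\nu}(x_0)<\frac{\partial v}{\partial\nu}(x_0)$.

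The only genuine obstacle is the degenerate/singular nature of $\Delta_p$, which the linearization neutralizes: the argument lives on a neighborhood of $x_0$ on which the linearized operator is uniformly elliptic, which forces a non-degeneracy requirement at $x_0$ — if $\nabla u(x_0)=\nabla v(x_0)=0$ then both normal derivatives vanish and the strict inequality is vacuous, so such a hypothesis is implicit there. It is also where the $C^2$ regularity of $u$ and $v$ is essential: it is what turns the subtracted equation into an honest linear equation with \emph{continuous} coefficients in the single unknown $w$ — for merely $C^1$ solutions one would obtain only a distributional inequality with possibly discontinuous coefficients, for which the boundary point lemma can fail — and it is why one cannot simply invoke a quasilinear comparison Hopf lemma, the strong comparison principle for $\Delta_p$ being itself delicate when $p\neq2$ (cf. Theorem~\ref{th:scp}, valid only outside the critical set). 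For $p\ge2$ the coefficient bounds are automatic; for $1<p<2$ the one point to record is the $t$-integrability of $|(1-t)\nabla u+t\nabla v|^{p-2}$, resting on $p-2>-1$.
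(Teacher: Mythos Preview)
The paper does not give a proof of this statement; it is quoted as a known tool from Pucci--Serrin \cite[Theorem~2.7.1]{MR2356201}. Your approach --- linearize via $A(\nabla v)-A(\nabla u)=M(x)\nabla w$ with $M(x)=\int_0^1 DA\bigl((1-t)\nabla u+t\nabla v\bigr)\,dt$, check uniform ellipticity of $M$ near $x_0$, and invoke the linear Hopf lemma --- is precisely the standard argument behind that reference, so there is no genuine divergence of method to compare.

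One point deserves sharpening. You note that if $\nabla u(x_0)=\nabla v(x_0)=0$ the conclusion is ``vacuous''; in fact it would be \emph{false} (it would read $0<0$), so the non-degeneracy $|\nabla u(x_0)|+|\nabla v(x_0)|>0$ is a genuine hypothesis that the statement, as recorded here, omits. Pucci--Serrin do include such a hypothesis, and in the paper's only use of this lemma (the proof of Lemma~\ref{lem:lower}) the one-dimensional comparison function $\overline u$ has $\partial_{x_N}\overline u>0$, so the condition is satisfied there. Your sketch is otherwise sound; the two places that would need a line or two more in a full write-up are (i) the uniform-in-$x$ upper bound on the entries of $M(x)$ near $x_0$ when $1<p<2$ (your $p-2>-1$ integrability remark is correct pointwise, but uniformity requires a short estimate using $|\nabla v|\ge\delta$ on a neighbourhood), and (ii) the exact form of Hopf's lemma being invoked for a divergence-form operator with merely continuous principal coefficients --- here the $C^2$ regularity of $u,v$ lets you pass to non-divergence form, which is the cleanest route.
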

	
	In the situations where the strong comparison principle does not hold, we will make use of the following weak sweeping principle by Dancer and Yu \cite{MR1955278}:
	\begin{theorem}[Weak sweeping principle]\label{th:wsp}
		Suppose that $\Omega$ is a bounded smooth domain in $\mathbb{R}^N$, $h(x, s)$ is measurable in $x \in \Omega$,
		continuous in $s$, and for each finite interval $J$, there exists a continuous increasing function $L(s)$ such
		that $h(x, s) + L(s)$ is nondecreasing in $s$ for $s \in J$ and $x \in \Omega$. Let $u_t$ and $v_t$, $t \in [t_1, t_2]$, be functions in $W^{1,p}(\Omega) \cap C(\overline \Omega)$ and satisfy in the weak sense,
		\[
		\begin{cases}
			-\Delta_p u_t \ge h(x,u_t) + \varepsilon_1(t) & \text{ in } \Omega,\\
			-\Delta_p v_t \le h(x,v_t) - \varepsilon_2(t) & \text{ in } \Omega,\\
			u_t \ge v_t + \varepsilon & \text{ on } \partial\Omega,\\
		\end{cases}
		\]
		for all $t \in [t_1, t_2]$, where
		\[
		\varepsilon_1(t) + \varepsilon_2(t) \ge \varepsilon > 0.
		\]
		Moreover, suppose that $u_{t_0} \ge v_{t_0}$ in $\Omega$ for some $t_0 \in [t_1, t_2]$ and $t \mapsto u_t$, $t \mapsto v_t$ are continuous from the finite closed interval $[t_1, t_2]$ to $C(\overline \Omega)$. Then
		\[
		u_t \ge v_t \text{ in } \Omega \text{ for all } t \in [t_1, t_2].
		\]
	\end{theorem}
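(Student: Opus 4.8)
The plan is to run a connectedness argument in the parameter $t$, reduced to a single quantitative comparison estimate. Put $S:=\{t\in[t_1,t_2]\mid u_t\ge v_t\ \text{in}\ \Omega\}$. Since $t\mapsto u_t$ and $t\mapsto v_t$ are continuous from the compact interval $[t_1,t_2]$ into $C(\overline{\Omega})$, the set $S$ is closed; it is nonempty because $t_0\in S$; and $[t_1,t_2]$ is connected. Hence it suffices to show $S$ is relatively open, and this reduces to the following \emph{one-shot comparison}, with a constant $\delta_0>0$ that does not depend on $t$: if $u_t\ge v_t-\delta_0$ in $\Omega$ for some $t\in[t_1,t_2]$, then in fact $u_t\ge v_t$ in $\Omega$. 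Granting this, openness is immediate: if $t^*\in S$ then $u_{t^*}\ge v_{t^*}$, so by the uniform continuity of $t\mapsto u_t,v_t$ into $C(\overline{\Omega})$ one has $u_t\ge v_t-\delta_0$ for all $t$ in a neighbourhood of $t^*$, and the one-shot comparison upgrades this to $u_t\ge v_t$, i.e.\ $t\in S$.

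To prove the one-shot comparison, fix $t$ with $u_t\ge v_t-\delta_0$ in $\Omega$ and assume, for contradiction, that the open set $\Omega^+:=\{x\in\Omega\mid v_t(x)>u_t(x)\}$ is nonempty. Since $u_t\ge v_t+\varepsilon$ on $\partial\Omega$ and $u_t,v_t\in C(\overline{\Omega})$, the set $\{v_t\ge u_t\}$ is compact and contained in the open set $\Omega$; hence $\varphi:=(v_t-u_t)^+\in W^{1,p}_0(\Omega)$ is a legitimate nonnegative test function, with $\varphi=v_t-u_t$ and $\nabla\varphi=\nabla(v_t-u_t)$ a.e.\ on $\Omega^+$ and $\varphi=0$ a.e.\ on $\Omega\setminus\Omega^+$. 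Combining the two weak differential inequalities tested against $\varphi$ gives
\[
\int_{\Omega^+}\bigl(|\nabla v_t|^{p-2}\nabla v_t-|\nabla u_t|^{p-2}\nabla u_t\bigr)\cdot\nabla(v_t-u_t)\,dx\le\int_{\Omega^+}\bigl(h(x,v_t)-h(x,u_t)-\varepsilon_1(t)-\varepsilon_2(t)\bigr)(v_t-u_t)\,dx.
\]
The integrand on the left is nonnegative because $\xi\mapsto|\xi|^{p-2}\xi$ is monotone; therefore, using $\varepsilon_1(t)+\varepsilon_2(t)\ge\varepsilon$ and $v_t-u_t>0$ on $\Omega^+$,
\[
\varepsilon\int_{\Omega^+}(v_t-u_t)\,dx\le\int_{\Omega^+}\bigl(h(x,v_t)-h(x,u_t)\bigr)(v_t-u_t)\,dx.
\]

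It remains to dominate the right-hand side, which is where the structural hypothesis on $h$ enters. As $[t_1,t_2]\times\overline{\Omega}$ is compact, all the functions $u_t,v_t$ take values in a fixed bounded interval $J$; fix the corresponding continuous increasing $L$ so that $\widetilde h(x,s):=h(x,s)+L(s)$ is nondecreasing in $s\in J$, and let $\omega$ be a common nondecreasing modulus of continuity on $J$ for the family $\{\widetilde h(x,\cdot)\}_{x\in\Omega}$ — in our applications $h$ does not depend on $x$, so $\omega$ is simply the modulus of continuity of $\widetilde h$ on $J$. On $\Omega^+$ one has $0<v_t-u_t\le\delta_0$, and since $L$ is increasing,
\[
h(x,v_t)-h(x,u_t)\le\widetilde h(x,v_t)-\widetilde h(x,u_t)\le\omega(v_t-u_t)\le\omega(\delta_0).
\]
Choosing $\delta_0$ small enough that $\omega(\delta_0)<\varepsilon$, the last three displays force $\int_{\Omega^+}(v_t-u_t)\,dx=0$, hence $|\Omega^+|=0$, hence $v_t\le u_t$ throughout $\Omega$ by continuity — contradicting $\Omega^+\ne\emptyset$. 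This proves the one-shot comparison, and the theorem follows.

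The genuine difficulty, as I see it, is precisely this last estimate. Unlike the Laplacian, the $p$-Laplacian admits no comparison principle in general, so after testing with $(v_t-u_t)^+$ the only positivity at one's disposal is the strict gap $\varepsilon_1(t)+\varepsilon_2(t)\ge\varepsilon>0$, and one must ensure the zeroth-order contribution $h(x,v_t)-h(x,u_t)$ cannot overwhelm it. The one-sided bound encoded by $L$, combined with the smallness of $v_t-u_t$ supplied by the continuity of $t\mapsto u_t,v_t$, is exactly what prevents this; the remaining ingredients — the monotonicity of $\xi\mapsto|\xi|^{p-2}\xi$ for the principal part, and the compactness of $\{v_t\ge u_t\}$ inside $\Omega$ that legitimises the test function — are routine.
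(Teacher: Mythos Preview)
The paper does not supply its own proof of this result; it merely records that the statement is taken from \cite{MR3058211} and that the argument is ``almost identical to that of \cite[Lemma 2.7]{MR1955278}''. Your connectedness argument in the parameter $t$, reduced to a one-shot comparison proved by testing against $(v_t-u_t)^+$ and using the monotonicity of $\xi\mapsto|\xi|^{p-2}\xi$, is exactly the standard route one finds in those references, and the overall architecture is correct.

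There is, however, a genuine gap at the step where you invoke a \emph{common} modulus of continuity $\omega$ for the family $\{\widetilde h(x,\cdot)\}_{x\in\Omega}$. The hypotheses only give continuity of $h(x,\cdot)$ for each fixed $x$, with no equicontinuity in $x$. The structural assumption --- that $\widetilde h(x,s)=h(x,s)+L(s)$ is nondecreasing in $s$ --- yields only the \emph{lower} bound
\[
h(x,v_t)-h(x,u_t)\ \ge\ -\bigl(L(v_t)-L(u_t)\bigr)\quad\text{on }\Omega^+,
\]
whereas your contradiction requires the \emph{upper} bound $h(x,v_t)-h(x,u_t)<\varepsilon$. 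Your intermediate inequality $h(x,v_t)-h(x,u_t)\le\widetilde h(x,v_t)-\widetilde h(x,u_t)$ is fine, but the next step $\widetilde h(x,v_t)-\widetilde h(x,u_t)\le\omega(v_t-u_t)$ is precisely what the stated hypotheses do not provide: monotonicity of $\widetilde h(x,\cdot)$ controls how fast it can \emph{decrease}, not how fast it can \emph{increase}. You flag this yourself and observe that in the paper's applications $h$ is independent of $x$, so a single modulus exists and the argument closes --- and for the purposes of this paper that is entirely sufficient. But as a proof of Theorem~\ref{th:wsp} in the generality stated, the step is unjustified; one needs either an additional equicontinuity-type hypothesis on $h$ (which the cited sources may well carry implicitly) or a different mechanism to control the upward oscillation of $h(x,\cdot)$ uniformly in $x$.
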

	
	The statement of Theorem \ref{th:wsp} is taken from \cite{MR3058211}. The proof of this theorem is almost identical to that of \cite[Lemma 2.7]{MR1955278}.
	
	\section{Classification of nonnegative solutions in dimension one}\label{sect3}
	We prove Theorem \ref{th:1D} in this section.
	Let $f:[0,+\infty) \to \mathbb{R}$ be a continuous function which is locally Lipschitz continuous except possibly at its zeros such that \eqref{smp_cond} holds. Let $u$ be a solution to \eqref{1D}. In Lemmas \ref{lem:1D1} and \ref{lem:1D2} below, we prove local symmetry properties of $u$.	
	\begin{lemma}\label{lem:1D1}
		If there exist $0\le t_0<t_1<+\infty$ such that $u' \ne 0$ in $(t_0, t_1)$ and $u'(t_1)=0$, then
		\[
		u(t) = u(2t_1-t) \quad\text{ for } t\in(t_0, t_1).
		\]
	\end{lemma}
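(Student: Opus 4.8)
The plan is to turn the assertion into a conservation law for the one‑dimensional equation $-(|u'|^{p-2}u')'=f(u)$ together with a uniqueness statement in the region where $u$ is strictly monotone. The case $u'<0$ on $(t_0,t_1)$ is handled analogously, working with $u-u(t_1)$ and the first inequality in \eqref{smp_cond} in place of $u(t_1)-u$ and the second, so I assume $u'>0$ on $(t_0,t_1)$ and set $\rho:=u(t_1)$, hence $u<\rho$ on $(t_0,t_1)$. The first step is the first integral: from the weak formulation, $|u'|^{p-2}u'\in C^1(\mathbb{R}_+)$ with $(|u'|^{p-2}u')'=-f(u)$; writing $|u'|^p=\bigl||u'|^{p-2}u'\bigr|^{p/(p-1)}$ and using that $x\mapsto|x|^{p/(p-1)}$ is $C^1$ for $p>1$, one gets $|u'|^p\in C^1(\mathbb{R}_+)$ and $\frac{d}{dt}\bigl(\tfrac{p-1}{p}|u'|^p\bigr)=-f(u)u'$. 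Thus $E(t):=\tfrac{p-1}{p}|u'(t)|^p+F(u(t))$ has $E'\equiv0$, and evaluating at $t_1$ gives $E\equiv F(\rho)$ on $\mathbb{R}_+$.

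The crucial point is that $f(\rho)>0$. If $f(\rho)<0$, then $|u'|^{p-2}u'$ would be increasing in a left neighbourhood of $t_1$, incompatible with its being positive on $(t_0,t_1)$ and vanishing at $t_1$. If $f(\rho)=0$, then $\rho\in Z_f$ and I apply Hopf's lemma (Theorem \ref{th:smp}) to $\phi:=\rho-u$, which is positive on a left neighbourhood of $t_1$, vanishes at $t_1$, and by the second inequality in \eqref{smp_cond} satisfies $-\Delta_p\phi+L\phi^{p-1}\ge0$ there for a suitable $L\ge0$; the lemma gives $u'(t_1)=-\phi'(t_1)>0$, contradicting $u'(t_1)=0$. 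Hence $f(\rho)>0$, so $f>0$ and $F$ is strictly increasing on some interval $(\rho-\eta,\rho+\eta)$.

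From $F(u(\cdot))\le F(\rho)$ (the conservation law), $F>F(\rho)$ on $(\rho,\rho+\eta)$, and continuity of $u$ with $u(t_1)=\rho$, it follows that $u\le\rho$ on all of $\mathbb{R}_+$. Hence for $t$ slightly larger than $t_1$ one has $u(t)\in(\rho-\eta,\rho]$, so $f(u(t))>0$ and $|u'|^{p-2}u'(t)=-\int_{t_1}^t f(u)\,ds<0$, i.e. $u$ is strictly decreasing on some $(t_1,t_1+\delta)$. Now the reflection $v(t):=u(2t_1-t)$ solves the same equation, is strictly decreasing on $(t_1,2t_1-t_0)$ with $v(t_1)=\rho$, and inherits $\tfrac{p-1}{p}|v'|^p+F(v)=F(\rho)$. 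On any interval where a solution $y$ is strictly decreasing one has $y'=-\bigl(\tfrac{p}{p-1}(F(\rho)-F(y))\bigr)^{1/p}$, and separating variables and letting the lower limit tend to $t_1^+$ yields
\[
\int_{y(t)}^{\rho}\frac{d\sigma}{\bigl(F(\rho)-F(\sigma)\bigr)^{1/p}}=\Bigl(\tfrac{p}{p-1}\Bigr)^{1/p}(t-t_1),
\]
the improper integral converging since $F(\rho)-F(\sigma)\sim f(\rho)(\rho-\sigma)$ as $\sigma\to\rho^-$ with $f(\rho)>0$ and $1/p<1$. As the left side is strictly monotone in $y(t)$, this determines $y(t)$ uniquely; applying it to $u$ and to $v$ on $(t_1,t_1+\delta)$, and using $v'<0$ on all of $(t_1,2t_1-t_0)$ to prevent $u$ from leaving the strictly decreasing regime before $2t_1-t_0$, I obtain $u\equiv v$ on $(t_1,2t_1-t_0)$, i.e. $u(2t_1-t)=u(t)$ for $t\in(t_0,t_1)$.

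The main obstacle is the absence of classical ODE uniqueness at the degenerate level $t=t_1$ (where $u'=0$) and at zeros of $f$; it is circumvented by the conservation law together with \eqref{smp_cond}, which forces $f(\rho)>0$ and hence puts the comparison with the reflection into a regime where the characteristic integral $\int^{\rho}(F(\rho)-F(\sigma))^{-1/p}\,d\sigma$ converges and selects a unique solution.
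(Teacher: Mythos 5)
Your proof is correct and follows essentially the same route as the paper's: the first integral $\tfrac{p-1}{p}|u'|^p + F(u) = F(u(t_1))$, the two-case argument that $f(u(t_1))>0$ (Hopf's lemma when $f(u(t_1))=0$, the sign of the flux $|u'|^{p-2}u'$ otherwise), and the resulting implicit integral formula that forces the reflection symmetry. Deriving the conservation law globally via $C^1$-regularity of $|u'|^{p-2}u'$, explicitly bounding $u\le\rho$, and phrasing the comparison through the reflected solution $v$ are presentational variants of what the paper does interval-by-interval.
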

	\begin{proof}
		We only consider the case $u' > 0$ in $(t_0, t_1)$ since the case $u' < 0$ in $(t_0, t_1)$ is quite similar.
		
		By the standard regularity, $u$ is $C^2$ in $(t_0, t_1)$ and in this interval
		\begin{equation}\label{c2}
			(p-1)(u')^{p-2}u'' = -f(u).
		\end{equation}
		
		First, we claim that $f(u(t_1)) > 0$. Indeed, if $f(u(t_1)) = 0$, then we obtain from \eqref{smp_cond} that
		\[
		-\Delta_p(u(t_1)-u(t)) + C(u(t_1)-u(t))^{p-1} = -f(u(t)) + C(u(t_1)-u(t))^{p-1} \ge 0
		\]
		for some constant $C>0$ and all $t < t_1$ close to $t_1$. We can now apply Hopf's lemma \cite{MR768629} to deduce $u'(t_1)>0$, a contradiction. Now we assume $f(u(t_1)) < 0$, then from \eqref{c2} we obtain $u''(t) > 0$ for all $t < t_1$ and close to $t_1$. It follows that $u'(t_1) > u'(t) > 0$ for $t < t_1$ and close to $t_1$. This contradiction shows that $f(u(t_1)) > 0$.
		
		By continuity, we have $f(u(t)) > 0$ for $t\in[t_1,t_1+\varepsilon]$ for some small $\varepsilon>0$.
		
		We claim that $u'(t)\le0$ or all $t\in(t_1,t_1+\varepsilon)$. Otherwise we can find $r_1\in(t_1,t_1+\varepsilon)$ such that $u'(r_1) > 0$. Consider the maximal interval $(r_2,r_1]\subset(t_1,r_1]$ such that
		\[
		u'(t)>0 \text{ in } (r_2,r_1], \quad u'(r_2)=0.
		\]
		From the standard elliptic regularity, we know that $u$ is $C^2$ in $(r_2,r_1]$. Hence \eqref{c2} holds in this interval and $u''(t)<0$ for $t\in(r_2,r_1]$. This implies $u'(r_2)>u'(r_1)>0$, which is a contradiction.
		
		We further claim that $u'(t)<0$ or all $t\in(t_1,t_1+\varepsilon)$. Suppose on the contrary that $u'(r_1) = 0$ for some $r_1\in(t_1,t_1+\varepsilon)$. If $u'(t) = 0$ for all $t\in[t_1,r_1]$ then from the equation we deduce $f(u(t)) = 0$ in this interval, a contradiction. Hence there exists $r_2\in(t_1,r_1)$ such that $u'(r_2) < 0$. Let $(r_2,r_3)\subset(r_2,r_1)$ be the maximal interval such that
		\[
		u'(t)<0 \text{ in } [r_2,r_3), \quad u'(r_3)=0.
		\]
		Now, \eqref{c2} holds in $[r_2,r_3)$, which implies $u''(t)<0$ in this interval. Therefore, $u'(r_2)>u'(r_3)=0$, a contradiction.
		
		It follows from \eqref{c2} that
		\begin{equation}\label{p-Laplace-integral}
			\frac{p-1}{p} |u'(t)|^p + F(u(t)) = F(u(t_1)) \quad\text{ for } t\in[t_0,t_1].
		\end{equation}		
		Therefore,
		\begin{equation}\label{unique}
			\int_{u(t_0)}^{u(t)} \frac{ds}{\left[F(u(t_1))-F(s)\right]^\frac{1}{p}} = \left(\frac{p}{p-1}\right)^\frac{1}{p}(t-t_0) \quad\text{ for all } t\in[t_0,t_1].
		\end{equation}
		
		Now let $(t_1,t_2)\subset(t_1,+\infty)$ be the maximal interval such that $u'(t)<0$ or all $t\in(t_1,t_2)$. Hence $t_2\ge t_1+\varepsilon$ and either $t_2=+\infty$ or $u'(t_2)=0$. We show that $t_2\ge2t_1-t_0$ and $u(t)=u(2t_1-t)$ for $t\in [t_0,t_1]$. Clearly, \eqref{p-Laplace-integral} also holds for $t\in[t_1, t_2]$ and we deduce
		\begin{equation}\label{unique2}
			\int_{u(t)}^{u(t_1)} \frac{ds}{\left[F(u(t_1))-F(s)\right]^\frac{1}{p}} = \left(\frac{p}{p-1}\right)^\frac{1}{p}(t-t_1) \quad\text{ for all } t\in[t_1,t_2].
		\end{equation}
		From \eqref{unique} and \eqref{unique2}, we have
		\begin{align*}
			\int_{u(t_0)}^{u(t)} \frac{ds}{\left[F(u(t_1))-F(s)\right]^\frac{1}{p}} &= \left\{\int_{u(t_0)}^{u(t_1)} - \int_{u(t)}^{u(t_1)}\right\} \frac{ds}{\left[F(u(t_1))-F(s)\right]^\frac{1}{p}}\\
			&= \left(\frac{p}{p-1}\right)^\frac{1}{p}(2t_1-t_0-t) \quad\text{ for all } t\in[t_1,t_2],
		\end{align*}
		or equivalently,
		\[
		\int_{u(t_0)}^{u(2t_1-t)} \frac{ds}{\left[F(u(t_1))-F(s)\right]^\frac{1}{p}} = \left(\frac{p}{p-1}\right)^\frac{1}{p}(t-t_0) \quad\text{ for all } t\in[2t_1-t_2, t_1],
		\]
		Comparing this with \eqref{unique} we immediately obtain $t_2\ge2t_1-t_0$ and $u(2t_1 - t) = u(t)$ in $[t_0, t_1]$.
	\end{proof}
	
	\begin{lemma}\label{lem:1D2}
		If there exist $0< t_0<t_1\le+\infty$ such that $u' \ne 0$ in $(t_0, t_1)$ and $u'(t_0)=0$, then $t_0\ge\frac{t_1}{2}$ (hence $t_1<+\infty$) and
		\[
		u(t) = u(2t_0-t) \quad\text{ for } t\in(t_0, t_1).
		\]
	\end{lemma}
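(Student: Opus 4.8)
The plan is to reduce the statement to Lemma~\ref{lem:1D1} by reflecting $u$ about $t_0$. The point is that, since $u'(t_0)=0$ and $u$ is strictly monotone on $(t_0,t_1)$, the sign of $f$ at the value $u(t_0)$ is forced, and --- in contrast to the analogous point in Lemma~\ref{lem:1D1} --- it comes out \emph{opposite}; this then forces $u'$ to take the opposite sign to $u'|_{(t_0,t_1)}$ on a one-sided neighborhood to the \emph{left} of $t_0$, and feeding that monotonicity interval into Lemma~\ref{lem:1D1} produces the reflection identity. I assume throughout that $u'>0$ on $(t_0,t_1)$; the case $u'<0$ is entirely analogous after the obvious sign changes.

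Set $\phi:=|u'|^{p-2}u'$. Since $u$ solves $-(|u'|^{p-2}u')'=f(u)$ weakly with $f(u)$ continuous, $\phi\in C^1(\mathbb{R}_+)$ with $\phi'=-f(u)$, and $\phi$ is a strictly increasing function of $u'$ (same sign, same zeros). From $\phi>0$ on $(t_0,t_1)$ and $\phi(t_0)=0$ one gets $\phi'(t_0)=\lim_{t\to t_0^+}\phi(t)/(t-t_0)\ge0$, i.e.\ $f(u(t_0))\le0$; and $f(u(t_0))=0$ is ruled out because \eqref{smp_cond} then gives $-\Delta_p(u-u(t_0))+C(u-u(t_0))^{p-1}\ge0$ near $t_0^+$, so Hopf's lemma (Theorem~\ref{th:smp}) yields $u'(t_0)>0$, a contradiction. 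Hence $f(u(t_0))<0$, and by continuity $f(u)<0$ on an interval $(t_0-\delta,t_0+\delta)$ with $0<\delta<t_0$; there $\phi'=-f(u)>0$, so $\phi$ is strictly increasing, and $\phi(t_0)=0$ forces $\phi<0$, hence $u'<0$, on $(t_0-\delta,t_0)$.

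Let $(r_0,t_0)$ be the maximal subinterval of $(0,t_0)$ on which $u'<0$; then $r_0\in[0,t_0)$ and either $r_0=0$ or $u'(r_0)=0$. Lemma~\ref{lem:1D1}, applied to $(r_0,t_0)$ (where $u'\ne0$ and $u'(t_0)=0$), gives $u(t)=u(2t_0-t)$ for $t\in(r_0,t_0)$; running the first-integral argument of that proof (identities \eqref{p-Laplace-integral} and \eqref{unique}) for the reflected profile $t\mapsto u(2t_0-t)$, this identity extends to $[r_0,2t_0-r_0]$, with $u'>0$ on $(t_0,2t_0-r_0)$. It remains to prove $t_1\le2t_0-r_0$. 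If not, then $2t_0-r_0\in(t_0,t_1)$ (since $r_0<t_0$), so $u'(2t_0-r_0)>0$, and differentiating the reflection identity gives $u'(r_0)=-u'(2t_0-r_0)<0$; if $r_0>0$ this contradicts the maximality of $(r_0,t_0)$, while if $r_0=0$ the identity gives $u(2t_0)=u(0)=0$, which is the minimum of $u$ over $\mathbb{R}_+$ attained at the interior point $2t_0$, so $u'(2t_0)=0$, contradicting $u'(2t_0)>0$. Therefore $t_1\le2t_0-r_0\le2t_0$, i.e.\ $t_0\ge t_1/2$ (in particular $t_1<+\infty$), and $u(t)=u(2t_0-t)$ holds on $(t_0,t_1)\subset(t_0,2t_0-r_0)$.

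I expect the main obstacle to be the sign analysis of the second paragraph: establishing that $f(u(t_0))<0$ (the reverse of the inequality that appears in Lemma~\ref{lem:1D1}) and, from it, that $u'$ does not oscillate but is strictly negative on an entire left neighborhood of $t_0$. This is precisely where \eqref{smp_cond}, Hopf's lemma, and the monotonicity of $\phi=|u'|^{p-2}u'$ enter. Once the local picture near $t_0$ is settled, the reflection and the way the constraint $u\ge0$ (together with $u(0)=0$) pins down $t_0\ge t_1/2$ are routine.
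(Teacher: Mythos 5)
Your proof is correct and follows essentially the same route as the paper's: establish $f(u(t_0))<0$, show $u'<0$ on a left neighborhood of $t_0$, then invoke Lemma~\ref{lem:1D1} together with the constraint $u\ge 0=u(0)$ to get the reflection identity and the bound $t_0\ge t_1/2$. Your systematic use of $\phi=|u'|^{p-2}u'$ (which is $C^1$ even where $u'$ vanishes) neatly packages the sign analysis that the paper carries out via the $u''$-formula \eqref{c2}, but the underlying argument is the same.
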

	
	\begin{proof}
		We only consider the case $u' > 0$ in $(t_0, t_1)$ since the other case is similar.
		
		Arguing as in the proof of Lemma \ref{lem:1D1}, we can show that $f(u(t_0))<0$ and $u'(t)<0$ for all $t\in(t_0-\varepsilon,t_0)$ for some $\varepsilon>0$ small. Then taking into account
		\[
		u(t) > 0 = u(0) \quad\text{ for } t\in(t_0,t_1)
		\]
		and arguing as before, we obtain that $t_1<+\infty$, $t_0\ge t_1-t_0$ and $u$ is symmetric in $(2t_0-t_1, t_1)$ about $t=t_0$.
	\end{proof}
	
	With Lemmas \ref{lem:1D1} and \ref{lem:1D2} in force, we are in a position to prove Theorem \ref{th:1D}.
	\begin{proof}[Proof of Theorem \ref{th:1D}]
		Let $u\not\equiv 0$ be a solution to \eqref{1D}.
		
		Since $u(0)=0$ and $u$ is positive somewhere, we have $u'(\tilde t)>0$ for some $\tilde t>0$. Let $(t_0, t_1)\subset(0,+\infty)$ be the largest interval containing $\tilde t$ such that $u'(t) > 0$ for all $t\in(t_0, t_1)$. We consider two cases.
		
		\textit{Case 1:} $t_1=+\infty$. From Lemma \ref{lem:1D2}, we deduce
		\[
		t_0=0.
		\]
		
		Hence $u'(t) > 0$ for all $t\in(0,+\infty)$. Consequently, $u \in C^2((0,+\infty))$ and
		\[
		(p-1)(u')^{p-2}u'' = -f(u) \text{ in } (0,+\infty).
		\]
		This implies
		\begin{equation}\label{equiv_eq}
			\frac{p-1}{p} (u')^p + F(u) = C \quad\text{ for } t\in[0,+\infty),
		\end{equation}
		where $C$ is a constant. Moreover, since $u$ is bounded and increasing, the limit
		\[
		\rho = \lim_{t\to+\infty} u(t)
		\]
		exists and is finite. For the same reason, we can choose a sequence $t_n\to+\infty$ such that $u'(t_n)\to0$. Integrating $-((u')^{p-1})' = f(u)$ in $(0,t_n)$, we get
		\begin{equation}\label{ks}
			(u')^{p-1}(0) - (u')^{p-1}(t_n) = \int_{0}^{t_n} f(u(s)) ds.
		\end{equation}
		It follows that $f(\rho)=0$. Otherwise, the right-hand side of \eqref{ks} goes to $\pm\infty$, while the left-hand side is finite as $n\to\infty$.
		Letting $t=t_n\to+\infty$ in \eqref{equiv_eq}, we deduce $C=F(\rho)$. Hence
		\begin{equation}\label{kl}
			\frac{p-1}{p} (u')^p + F(u) = F(\rho) \quad\text{ for } t\in[0,+\infty).
		\end{equation}
		In particular, $u'(0)=\left(\frac{p}{p-1}F(\rho)\right)^\frac{1}{p}$ and $F(\rho)>F(u(t))$ for $t\in(0,+\infty)$. Since $u:(0,+\infty)\to(0,\rho)$ is a bijective function, we obtain
		\[
		F(\rho)>F(t) \quad\text{ for } t\in(0,\rho).
		\]
		
		Moreover, if $f(0)\ge0$, then from \eqref{smp_cond} and Hopf's lemma (Theorem \ref{th:smp}), we have $u'(0)>0$. Hence using \eqref{kl} with $t=0$, we deduce $F(\rho)>F(0)$.
		
		Conversely, let $\rho\in Z_f^*$. From \eqref{smp_cond}, we deduce
		\[
		F(\rho) - F(s) \le C(\rho - s)^p
		\]
		for some $C>0$ and all $s<\rho$ close to $\rho$. This implies that
		\begin{equation}\label{k2}
			\int_{0}^{\rho} \frac{ds}{\left[F(\rho)-F(s)\right]^\frac{1}{p}} = +\infty.
		\end{equation}
		On the other hand,
		\begin{equation}\label{k3}
			\int_{0}^{t} \frac{ds}{\left[F(\rho)-F(s)\right]^\frac{1}{p}} < +\infty \quad\text{ for } 0<t<\rho.
		\end{equation}
		Indeed, if $F(\rho)>F(t)$ for $t\in[0,\rho)$, then \eqref{k3} holds. If $F(\rho)=F(0)>F(t)$ for $t\in(0,\rho)$, which only occurs in the case $f(0)<0$, then we have
		\[
		F(\rho)-F(s) = -F(s) = \int_{0}^{s} (-f(s))ds > \frac{|f(0)|s}{2}
		\]
		for small $s$ and \eqref{k3} also holds.
		
		Hence problem \eqref{1D} admits a solution $u_\rho$ which is uniquely determined by the formula
		\[
		\int_{0}^{u_\rho(t)} \frac{ds}{\left[F(\rho)-F(s)\right]^\frac{1}{p}} = \left(\frac{p}{p-1}\right)^\frac{1}{p}t \quad\text{ for all } t\in[0,+\infty).
		\]
		Taking the derivative, we can easily check that $u_\rho'>0$ and $u_\rho'(0)=\left(\frac{p}{p-1}F(\rho)\right)^\frac{1}{p}$. Moreover, \eqref{k2} implies $\lim_{t\to+\infty} u_\rho=\rho$.
		
		\textit{Case 2:} $t_1<+\infty$. In this case $u'(t_1)=0$.
		
		We show that $u'(t_0)=0$. This is obvious if $t_0>0$. Suppose on the contrary that $t_0=0$ and $u'(0)>0$. Then we can apply Lemma \ref{lem:1D1} to obtain
		\[
		u(t) = u(2t_1-t) \quad\text{ for } t\in(0, t_1).
		\]
		Consequently, $u(2t_1)=u(0)=0$ and $u'(2t_1)=-u'(0)<0$. This implies $u(t)<0$ for $t>2t_1$ and close to $2t_1$, a contradiction.
		
		Hence $u'(t_0)=0$ and $u$ is periodic due to Lemmas \ref{lem:1D1} and \ref{lem:1D2}.		
		More precisely, by setting $t^*=t_1-t_0$, we have
		$u'>0$ in $(0,t^*)$, $u'(0)=u'(t^*)=0$ and
		\[
		u(t+2kt^*)=u(t), \quad u(t+(2k+1)t^*)=u(t^*-t)
		\]
		for all $t\in[0,t^*]$ and all $k\in\mathbb{N}$. We must have $f(0)<0$. Otherwise, Hopf's lemma (Theorem \ref{th:smp}) would imply $u'(0)>0$, a contradiction. From the proof of Lemma \ref{lem:1D1}, we also have $f(u(t^*))>0$.
		
		Setting $\rho=u(t^*)$, from \eqref{c2} we have
		\begin{equation}\label{k4}
			\frac{p-1}{p} (u')^p + F(u) = F(\rho) \quad\text{ for } t\in[0,t^*],
		\end{equation}
		
		Hence $F(\rho)=F(0)>F(u(t))$ for $t\in(0,t^*)$. Since $u:(0,t^*)\to(0,\rho)$ is a bijective function, this implies $F(\rho)=F(0)>F(t)$ for $t\in(0,\rho)$. Moreover, \eqref{k4} yields
		\begin{equation}\label{k5}
			\int_{0}^{u(t)} \frac{ds}{\left[F(\rho)-F(s)\right]^\frac{1}{p}} = \left(\frac{p}{p-1}\right)^\frac{1}{p}t \quad\text{ for all } t\in[0,t^*].
		\end{equation}
		
		Let $t=t^*$, we find the relation between $t^*$ and $\rho$
		\[
		t^* = \left(\frac{p-1}{p}\right)^\frac{1}{p}\int_{0}^{\rho} \frac{ds}{\left[F(\rho)-F(s)\right]^\frac{1}{p}}.
		\]
		Hence $u$ has the form $\tilde u_\rho$. Conversely, if $f(0)<0$ and $\rho\in P_f$, then
		\[
		F(\rho)-F(s) = -F(s) = \int_{0}^{s} (-f(s))ds > \frac{|f(0)|s}{2}
		\]
		for small $s$. Hence
		\[
		\int_{0}^{\rho} \frac{ds}{\left[F(\rho)-F(s)\right]^\frac{1}{p}} < +\infty.
		\]
		Therefore, problem \eqref{1D} admits a $t^*$-periodic solution $u$ which is uniquely determined by formula \eqref{k5}.
		
		This completes the proof.
	\end{proof}
	
	\section{Existence of positive solutions in a ball}\label{sect4}
	In this section, we analyze the $N$-dimensional problem
	\begin{equation}\label{ball}
		\begin{cases}
			-\Delta_p u = f(u) &\text{ in } B_r,\\
			u > 0 &\text{ in } B_r,\\
			u = 0 &\text{ on } \partial B_r,
		\end{cases}
	\end{equation}
	where $B_r \subset \mathbb{R}^N $ stands for the ball of radius $r$ centered at the origin. Motivated by \cite{MR937538,MR2822222,MR3058211}, we have the following existence result.
	\begin{proposition}\label{prop:ball}
		Assume that $f$ is a locally Lipschitz continuous function with $f (0) \ge 0$ and $f(\rho) = 0$ for some $\rho > 0$. Assume in addition that
		\[
		F_\rho(t):=\int_{t}^{\rho} f(s) ds > 0 \quad\text{ for all } t \in [0, \rho).
		\]
		Then for every $\varepsilon > 0$ there exists a positive number
		$R_0 = R_0(\varepsilon)$ such that for $r \ge R_0$, problem \eqref{ball} admits a solution $u_r \in C^{1,\alpha}(\overline{B_r})$ such that
		\[
		\rho - \varepsilon \le \|u_r\|_{L^\infty(B_r)} < \rho.
		\]
	\end{proposition}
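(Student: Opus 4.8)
The plan is to construct $u_r$ as a global minimiser of a truncated energy functional on $B_r$ and then to read off the asymptotics of $\|u_r\|_{L^\infty(B_r)}$ from a two-sided comparison of the minimal energy. I would first truncate $f$: set $\tilde f(t)=f(0)$ for $t\le 0$, $\tilde f(t)=f(t)$ for $0\le t\le\rho$, and $\tilde f(t)=0$ for $t\ge\rho$. Since $f$ is locally Lipschitz and $f(\rho)=0$, $\tilde f$ is bounded and globally Lipschitz, and its primitive $\tilde F(t):=\int_0^t\tilde f$ agrees with $F$ on $[0,\rho]$, equals $F(\rho)$ on $[\rho,\infty)$ and equals $f(0)t\le 0$ on $(-\infty,0]$; in particular $\sup_{\mathbb R}\tilde F=F(\rho)=F_\rho(0)>0$. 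I would then minimise
\[
J_r(u)=\frac1p\int_{B_r}|\nabla u|^p-\int_{B_r}\tilde F(u)
\]
over $W^{1,p}_0(B_r)$: from $\tilde F\le F(\rho)$ the functional is coercive, and it is weakly lower semicontinuous (convexity for the gradient term, and the compact embedding $W^{1,p}_0(B_r)\hookrightarrow L^p(B_r)$ together with the Lipschitz continuity of $\tilde f$ for the potential term), so a minimiser $u_r$ exists and weakly solves $-\Delta_p u_r=\tilde f(u_r)$ in $B_r$, $u_r=0$ on $\partial B_r$, with $m_r:=J_r(u_r)\le J_r(0)=0$ (and in fact $m_r<0$ for $r$ large by the plateau estimate below, so that $u_r\not\equiv0$ then). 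Replacing $u_r$ by $u_r^+$ does not increase the Dirichlet term and, since $\tilde F\le 0=\tilde F(0)$ on $(-\infty,0]$, does not decrease $\int_{B_r}\tilde F(\cdot)$; replacing it further by $\min(u_r^+,\rho)$ again does not increase the Dirichlet term and, since $\tilde F$ is constant on $[\rho,\infty)$, leaves $\int_{B_r}\tilde F(\cdot)$ unchanged — so $\min(u_r^+,\rho)$ is still a minimiser, and I may assume $0\le u_r\le\rho$ in $B_r$. Then $u_r>0$ and $u_r<\rho$ in $B_r$ should follow from the strong maximum principle and Hopf's lemma (Theorem~\ref{th:smp}), applied near a hypothetical interior zero of $u_r$ (using $f(0)\ge 0$) and near a hypothetical interior point with $u_r=\rho$ (using $f(\rho)=0$), after rewriting the equation for $u_r$, respectively for $\rho-u_r$, in the form $-\Delta_p w+c\,w^q\ge0$; otherwise the corresponding set would be open and closed in $B_r$, contradicting $u_r|_{\partial B_r}=0$. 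By interior and boundary $C^{1,\alpha}$-regularity for the $p$-Laplacian with bounded right-hand side, $u_r\in C^{1,\alpha}(\overline{B_r})$; and since $0<u_r<\rho$ the truncation is inactive, so $u_r$ solves \eqref{ball}.

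The core of the proof is the lower bound $\|u_r\|_{L^\infty(B_r)}\ge\rho-\varepsilon$ for all sufficiently large $r$, which I would get by sandwiching $m_r$. For the upper estimate, I would test $J_r$ with the plateau function $\phi_r$ equal to $\rho$ on $B_{r-1}$ and decreasing linearly to $0$ on $B_r\setminus B_{r-1}$; as $|\nabla\phi_r|\le\rho$ and $\tilde F$ is bounded,
\[
m_r\le J_r(\phi_r)\le -F(\rho)\,|B_{r-1}|+C\,r^{N-1}\le -F(\rho)\,|B_r|+C'\,r^{N-1},
\]
with $C'$ depending only on $N,\rho,f$. For the lower estimate, given $\varepsilon>0$ put $\delta_\varepsilon:=\min_{[0,\rho-\varepsilon]}F_\rho$, which is strictly positive because $F_\rho$ is continuous and positive on $[0,\rho)$; then $\tilde F(t)=F(\rho)-F_\rho(t)\le F(\rho)-\delta_\varepsilon$ on $[0,\rho-\varepsilon]$, so if $\|u_r\|_{L^\infty(B_r)}\le\rho-\varepsilon$ then
\[
m_r=J_r(u_r)\ge -\int_{B_r}\tilde F(u_r)\ge -(F(\rho)-\delta_\varepsilon)\,|B_r|.
\]
Comparing the two inequalities forces $\delta_\varepsilon|B_r|\le C'r^{N-1}$, hence $r\le R_0(\varepsilon)$ for some $R_0$ depending only on $\varepsilon,N,\rho,f$. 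Thus, for every $r\ge R_0(\varepsilon)$, the minimiser $u_r$ satisfies $\rho-\varepsilon\le\|u_r\|_{L^\infty(B_r)}<\rho$, which is the assertion.

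The genuine obstacle is this two-sided control of the minimal energy: the competitor $\phi_r$ must be chosen so that its Dirichlet energy is of lower order $O(r^{N-1})$ while its bulk term already beats $-F(\rho)|B_r|$ up to a lower-order error, and one must simultaneously rule out that a minimiser trapped below $\rho-\varepsilon$ has energy that negative — the strict inequality $F_\rho>0$ on $[0,\rho)$ (equivalently $F(\rho)>0$ and $F<F(\rho)$ on $[0,\rho-\varepsilon]$) is exactly what makes these incompatible for large $r$. A secondary point that is not entirely routine is the strict two-sided bound $0<u_r<\rho$: the relevant zeros of $\tilde f$ are $0$ and $\rho$, and the maximum-principle and Hopf arguments must be run after casting the equation for $u_r$ — respectively for $\rho-u_r$ — into the form required by Theorem~\ref{th:smp}, using $f(0)\ge0$, $f(\rho)=0$ and the local Lipschitz continuity of $f$.
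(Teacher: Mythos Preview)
Your proof is correct and follows essentially the same route as the paper: truncate $f$ outside $[0,\rho]$, minimise the associated energy on $W^{1,p}_0(B_r)$, and compare the minimal energy against a plateau competitor supported on $B_r\setminus B_{r-1}$ to force $\|u_r\|_{L^\infty}\ge\rho-\varepsilon$ for large $r$. The only cosmetic differences are that the paper first reduces to those $\varepsilon$ for which $\min_{[0,\rho-\varepsilon]}F_\rho=F_\rho(\rho-\varepsilon)$ (whereas you work directly with $\delta_\varepsilon:=\min_{[0,\rho-\varepsilon]}F_\rho$), and that the paper obtains $0\le u_r\le\rho$ by testing the equation with $(-u_r)_+$ and $(u_r-\rho)_+$ and then invokes Schwarz rearrangement to make the minimiser radially decreasing, while you truncate the minimiser itself at the levels $0$ and $\rho$.
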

	\begin{proof}
		We first observe that for each $n>1$, there exists $\varepsilon<\frac{1}{n}$ such that
		\begin{equation}\label{F_rho}
			F_\rho(t) \ge F_\rho(\rho-\varepsilon) \quad\text{ for all } t\in[0,\rho-\varepsilon].
		\end{equation}
		Indeed, we may choose
		\[
		\varepsilon = 
		\begin{cases}
			\frac{1}{n} & \text{ if } \min_{[0,\rho-\frac{1}{n}]} F_\rho = F_\rho(\rho-\frac{1}{n}),\\
			\sup\{z<\frac{1}{n} \mid F_\rho(\rho-z)\le \min_{[0,\rho-\frac{1}{n}]} F_\rho \} & \text{ otherwise}.
		\end{cases}
		\]
		Therefore, it suffices to prove the proposition for $\varepsilon>0$ sufficiently small so that \eqref{F_rho} is satisfied. We set
		\[
		\tilde{f}(t) = 
		\begin{cases}
			f(0) &\text{ for } t<0,\\
			f(t) &\text{ for } 0\le t\le\rho,\\
			0 &\text{ for } t>\rho,
		\end{cases}
		\]
		and
		\[
		\tilde{F}_\rho(t) = \int_{t}^{\rho} \tilde{f}(s) ds,
		\]
		then $\tilde{F}_\rho(t)\ge0$ for all $t\in\mathbb{R}$. We consider the energy functional
		\[
		I_r(u) = \int_{B_r} \left(\frac{1}{p}|\nabla u|^p + \tilde{F}_\rho(u)\right)
		\]
		for $u\in W^{1,p}_0(B_r)$. Clearly, a critical point of $I_r$ corresponds to a weak solution of
		\begin{equation}\label{ball_cut}
			\begin{cases}
				-\Delta_p u = \tilde{f}(u) &\text{ in } B_r,\\
				u = 0 &\text{ on } \partial B_r.
			\end{cases}
		\end{equation}
		Because $\tilde{f}(t)\ge0$ for $t\le0$ and $\tilde{f}(t)=0$ for $t\ge\rho$, we may exploit the weak maximum principle to deduce that any such a solution satisfies
		\begin{equation}\label{range}
			0 \le u \le \rho.
		\end{equation}
		More precisely, we may directly test \eqref{ball_cut} with $\max\{-u,0\}$ and $\max\{u-\rho,0\}$ to get \eqref{range}.
		Therefore, all solutions of \eqref{ball_cut} are also solutions of \eqref{ball}. Moreover, by standard elliptic regularity for $p$-Laplace equations (see \cite{MR709038,MR727034,MR969499}), we know that such solutions belong to $C^{1,\alpha}(\overline{B_r})$.
		
		Since the functional $I_r$ is well-defined and coercive, it has a minimizer $u_r$. Thus, $u_r$ is a nonnegative solution to \eqref{ball}. Moreover, since $u_r$ is a minimizer, by the well-known rearrangement theory (see \cite{MR810619}), it must be radially symmetric and decreasing away from the center of the domain. By abuse of notation, we write $u_r(x)=u_r(|x|)$. Then
		\[
		0 \le u_r(|x|) \le u_r(0) \le \rho \quad\text{ in } B_r.
		\]
		
		It remains to show that there exists $r>0$ such that $u_r(0) \ge \rho - \varepsilon$. Otherwise $u_r \le \rho - \varepsilon$ for all $r>0$. Exploiting \eqref{F_rho}, we obtain
		\[
		I_r(u_r) \ge \int_{B_r} F_\rho(u_r) \ge \int_{B_r} F_\rho(\rho-\varepsilon) = \omega_N r^N F_\rho(\rho-\varepsilon) \quad\text{ for all } r>0,
		\]
		where $\omega_N$ denotes the volume of $B_1$. We consider
		\[
		w_r(x) = 
		\begin{cases}
			\rho &\text{ for } |x|<r-1,\\
			\rho(r-|x|) &\text{ for } r-1<|x|\le r.
		\end{cases}
		\]
		Clearly, $w_r \in W^{1,p}_0(B_r)$ and $|\nabla w_r|$ and $F_\rho(w_r)$ are supported on the annulus $\{r-1 \le |x| \le r\}$. Hence there exists a constant $C$ independent of $r$ such that
		\[
		I_r(w_r) \le C\left[r^N - (r-1)^N\right] \quad\text{ for all } r>1.
		\]
		Since $v_r$ is the minimizer of $I_r$, we have $I_r(u_r) \le I_r(w_r)$. Thus
		\[
		\omega_N r^N F_\rho(\rho-\varepsilon) \le C\left[r^N - (r-1)^N\right] \quad\text{ for all } r>1,
		\]
		or equivalently,
		\[
		\omega_N F_\rho(\rho-\varepsilon) \le C\left[1 - \left(\frac{r-1}{r}\right)^N\right] \quad\text{ for all } r>1.
		\]
		Since $F_\rho(\rho-\varepsilon)>0$, the above inequality does not hold for large $r$. This contradiction shows that $u_r(0) \ge \rho-\varepsilon$ for all large $r$.
	\end{proof}
	
	\section{Monotonicity and one-dimensional symmetry}\label{sect5}

	\begin{lemma}\label{lem:upper}
		Under assumptions of Theorem \ref{th:main}, there exists a one-dimensional solution $\overline u$ of \eqref{main} such that $u \le \overline u \le \rho$.
	\end{lemma}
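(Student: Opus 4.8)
The one-dimensional solution $\overline u$ will be the solution $u_\rho$ produced by Theorem \ref{th:1D}(i), so the proof breaks into two parts: (a) showing that $\rho\in Z_f^*$, which is what makes $u_\rho$ available, together with $u_\rho'>0$, $u_\rho(0)=0$ and $u_\rho(t)\to\rho$; and (b) proving $u\le u_\rho$ (the bound $u_\rho\le\rho$ being automatic). Before both, I would record two facts about the PDE solution $u$. First, the supremum $\rho$ is never attained: with $v=\rho-u\ge0$, the second estimate in \eqref{smp_cond} gives $-\Delta_p v+Cv^{p-1}=-f(u)+C(\rho-u)^{p-1}\ge0$ on the open set $\{u>\rho-\varepsilon\}$, so Theorem \ref{th:smp} yields $v>0$ there — the alternative $v\equiv0$ on a connected component is excluded since $u=0$ on $\partial\mathbb{R}^N_+$ and $\mathbb{R}^N_+$ is connected — hence $u<\rho$ in $\mathbb{R}^N_+$. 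Second, $f>0$ on $(\rho-\varepsilon,\rho)$: having no zero there it has constant sign, and if that sign were negative then $\Omega_\varepsilon:=\{u>\rho-\varepsilon\}$ would be a non-empty open subset of $\mathbb{R}^N_+$ not meeting $\partial\mathbb{R}^N_+$ on which $u$ is a bounded $p$-subsolution with boundary value $\rho-\varepsilon$, contradicting the maximum principle for bounded subsolutions (compare $u$ with $(\rho-\varepsilon)+\varepsilon\phi_R$ on $\Omega_\varepsilon\cap B_R$, $\phi_R$ being $p$-harmonic with $\phi_R=0$ on $\partial\Omega_\varepsilon$ and $\phi_R=1$ on $\partial B_R$, and let $R\to\infty$). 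In particular $F$ is strictly increasing on $[\rho-\varepsilon,\rho]$.

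For (a) it then remains to exclude a level $t_0\in[0,\rho-\varepsilon]$ with $F(t_0)\ge F(\rho)$ (and, in the borderline case $F(\rho)=0$, to verify the defining property of $Z_f^0$); this is the heart of the lemma, and the point where hypotheses (i)--(iii) enter, precisely because for $p\ne2$ there is no usable strong comparison principle — Theorem \ref{th:scp} holds only away from the critical set — and no boundary point lemma unless the solution is $C^2$ (Theorem \ref{lem:bpm}). I would argue by contradiction: if $F$ had a maximum $\ge F(\rho)$ at some $t_0\le\rho-\varepsilon$, then applying Proposition \ref{prop:ball} to a truncation of $f$ at a suitable zero in $(0,\rho)$ produces radial bubble solutions on large balls that can be placed inside $\mathbb{R}^N_+$, and comparing them with $u$ through the weak sweeping principle (Theorem \ref{th:wsp}) yields a pointwise inequality incompatible with $\sup u=\rho$. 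The nondegeneracy or regularity furnished by (i) (then $u_\rho'(0)>0$ by Hopf's lemma, Theorem \ref{th:smp}), (ii) (then $F(\rho)>0$, so $u_\rho'(0)=(\tfrac{p}{p-1}F(\rho))^{1/p}>0$), or (iii) ($u$ is $C^2$ near a boundary point) is exactly what lets these comparisons be run up to $\partial\mathbb{R}^N_+$; the upshot is that $F$ is strictly increasing on $[0,\rho]$, so $f$ has no zero in $(0,\rho)$ and $\rho\in Z_f^*$ — or, in the remaining case $f(0)<0$ and $F(\rho)=0$, that $\rho\in Z_f^0\subseteq Z_f^*$.

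With $\overline u:=u_\rho$ in hand — a one-dimensional solution of \eqref{main} with $0<\overline u<\rho$, $\overline u'>0$ and $\overline u(x_N)\to\rho$ as $x_N\to+\infty$ — part (b) is a sliding argument in the $x_N$-direction. For $\tau>0$ put $\overline u^\tau(x)=\overline u(x_N+\tau)$; it solves the equation in $\{x_N>-\tau\}\supseteq\mathbb{R}^N_+$, satisfies $\overline u^\tau\ge\overline u(\tau)>0=u$ on $\partial\mathbb{R}^N_+$, is increasing in $x_N$, and tends to $\rho$ as $x_N\to+\infty$. The set $\Lambda:=\{\tau>0:u\le\overline u^\tau\ \text{in }\mathbb{R}^N_+\}$ is closed; I would prove it non-empty and open by running the weak sweeping principle (Theorem \ref{th:wsp}) on the bounded pieces $\{0<x_N<L\}\cap B_R(0)$, using that both $u$ and $\overline u^\tau$ approach $\rho$ as $x_N\to\infty$ to recover the required boundary inequality on the top face, and then letting $R,L\to+\infty$. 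Hence $\Lambda=(0,+\infty)$, and letting $\tau\downarrow0$ gives $u\le\overline u=u_\rho$, as desired.

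The genuine obstacle is part (a): converting the structural hypotheses (i)--(iii), together with the ``no zeros near $\rho$'' assumption, into the sharp fact $F(t)<F(\rho)$ for all $t\in[0,\rho)$, in the absence of a working comparison and boundary-point theory for the $p$-Laplacian. Part (b) is heavier than in the $p=2$ case for the same reason — one replaces the comparison principle by the weak sweeping principle and must control $u$ near infinity — but follows the now standard scheme.
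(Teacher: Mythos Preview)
Your proposal takes a fundamentally different route from the paper's, and unfortunately has real gaps.

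The paper's proof is much simpler and does \emph{not} identify $\overline u$ with $u_\rho$ at this stage. It works as follows: on each half-ball $B_n^+=\mathbb{R}^N_+\cap B_n$, the function $u|_{B_n^+}$ is a subsolution and the constant $\rho$ (since $f(\rho)=0$) is a supersolution of the Dirichlet problem with a suitable boundary datum; the standard sub/supersolution argument then yields a \emph{maximal} solution $u_n$ in the order interval $[u,\rho]$. One has $u_{n+1}\le u_n$, and the limit $\overline u=\lim_n u_n$ solves \eqref{main} with $u\le\overline u\le\rho$. Maximality and the invariance of the equation under horizontal translations force $\overline u$ to depend only on $x_N$. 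In particular, hypotheses (i)--(iii) of Theorem~\ref{th:main} are not used here at all; the inequality $F(t)<F(\rho)$ on $[0,\rho)$ is proved only later, in Lemma~\ref{lem:lower}, and its proof \emph{uses} the $\overline u$ just constructed as input (via Theorem~\ref{th:1D} applied to $\overline u$, together with (i), (ii), or the boundary point lemma under (iii)).

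Your plan reverses this logical order and runs into trouble in both parts. In part~(a), the sketched contradiction argument is vague, and the stated conclusion ``$F$ is strictly increasing on $[0,\rho]$, so $f$ has no zero in $(0,\rho)$'' is too strong: the theorem allows $f$ to have zeros in $(0,\rho-\varepsilon)$, and what is needed is only $F(t)<F(\rho)$ on $[0,\rho)$ (or the $Z_f^0$ alternative). More importantly, without already having a one-dimensional solution in hand, there is no clean way to invoke Theorem~\ref{th:1D} or Theorem~\ref{lem:bpm} to extract this inequality; the paper deliberately postpones it. In part~(b), the sliding argument has two gaps. First, the weak sweeping principle (Theorem~\ref{th:wsp}) requires $\varepsilon_1(t)+\varepsilon_2(t)\ge\varepsilon>0$, but $u$ and $\overline u^\tau$ solve the \emph{same} equation, so no strict gap is available. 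Second, you assume ``both $u$ and $\overline u^\tau$ approach $\rho$ as $x_N\to\infty$'' to handle the top boundary, but at this point nothing is known about the behavior of the PDE solution $u$ for large $x_N$; in fact, since $\sup u=\rho$ while $\overline u^\tau<\rho$ everywhere for finite $\tau$, even the non-emptiness of your set $\Lambda$ is unclear. The paper's maximal-solution construction sidesteps all of this: the inequality $u\le\overline u$ is built into the construction, so no comparison argument is needed.
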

	
	\begin{proof}
		We show that there is a maximal solution $\overline u$ in the order interval $[u, \rho]$ in the sense that any
		solution $v$ with $u\le v\le \rho$ satisfies $v \le \overline u$. To this end, for each $n>1$, we define
		\[
		B_n^+ = \mathbb{R}^N_+ \cap B_n.
		\]
		
		Setting
		\[
		\phi_n(x) =
		\begin{cases}
			\rho(|x'|-n+1)_+ &\text{ if } x_N=0,\\
			\rho &\text{ if } x_N>0,
		\end{cases}
		\]
		where we denote $t_+=\max\{t,0\}$.
		We consider the auxiliary problem
		\begin{equation}\label{aux1}
			\begin{cases}
				-\Delta_p u = f(u) &\text{ in } B_n^+,\\
				u = \phi_n &\text{ on } \partial B_n^+.
			\end{cases}
		\end{equation}
		
		Obviously, $u|_{B_n^+}$ is a lower solution and $\rho$ is an upper solution to \eqref{aux1}. Hence by the standard upper and lower solution argument, problem \eqref{aux1} has a maximal solution $u_n$ in the order interval $[u,\rho]$. By standard regularity, $u_n\in C^{1,\alpha}(B_n^+)$.
		By the maximality, we have $u_n \ge u_{n+1}$ in $B_n^+$ and $\overline u := \lim_{n\to\infty} u_n$ is a solution of \eqref{main}.
		
		Clearly, $u \le \overline u \le \rho$.
		If $v$ is any solution of \eqref{main} in $[u, \rho]$, then $v|_{B_n^+}$ is a lower solution of \eqref{aux1} and it follows that $u_n \ge v$ (since the standard upper and lower solution argument implies the existence of a solution in $[v,\rho]$ which is less than or equal to the maximal solution $u_n$). It follows that $v \le \overline u$. Thus $\overline u$ is the maximal
		solution in $[u, \rho]$.
		
		Since the equation is invariant under translations in $x'$, the maximality implies that $\overline u$ is a function of $x_N$ only.
		Indeed, we assume by contradiction that there exist two points $x=(x^0,l)$ and $y=(y^0,l)$ with $l>0$, such that $\overline u(x)\ne \overline u(y)$. We define $\tilde{u}(x',y)=\overline u(x'+y^0-x^0, x_N)$. Then $\tilde{u}$ is a solution to \eqref{main} and $v(x):=\max\{\overline u(x),\tilde{u}(x)\} \ge \overline u(x)$. Moreover, $v\not\equiv \overline u$ and $v$ is a lower solution to \eqref{main} satisfying $v\le \rho$. Hence \eqref{main} has a solution $u^*$ in the order interval $[v,\rho]$. It follows that $u^*$ is a solution in $[u,\rho]$ which satisfies $u^*\ge \overline u$ and $u^*\not\equiv \overline u$. This contradicts the maximality of $\overline u$.
		
		Thus $\overline u$ is a function of $x_N$ only.
	\end{proof}
	
	\begin{lemma}\label{lem:lower}
		Under assumptions of Theorem \ref{th:main}, there exists a one-dimensional solution $\underline u$ of \eqref{main} such that $\|\underline u\|_{L^\infty(\mathbb{R}^N)}=\rho$ and $\underline u \le u$ in $\mathbb{R}^N$.
	\end{lemma}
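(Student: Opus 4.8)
The plan is to take as $\underline u$ the monotone one–dimensional profile $u_\rho$ furnished by Theorem~\ref{th:1D} and to prove the inequality $\underline u\le u$ by sliding below $u$ the radial ball–solutions of Proposition~\ref{prop:ball}, with the ball tangent to $\partial\mathbb R^N_+$, and letting the radius tend to infinity. I would first record three preliminary facts. (a) Since $u$ is bounded, $f(u)\in L^\infty(\mathbb R^N_+)$, so the $C^{1,\alpha}$ estimates for the $p$–Laplacian give $\|\nabla u\|_{L^\infty}<\infty$; hence $u(x)\le\|\nabla u\|_{L^\infty}\,x_N$ near the boundary and the superlevel set $\{u>\rho-\delta\}$ keeps a distance $\ge c\delta$ from $\partial\mathbb R^N_+$. (b) By Lemma~\ref{lem:upper} and Theorem~\ref{th:1D}, the one–dimensional solution $\overline u$ must be the monotone profile $u_\rho$: indeed $\overline u\not\equiv0$, and $u\le\overline u\le\rho$ forces $\|\overline u\|_{L^\infty}=\rho$, while the periodic alternative $\tilde u_\rho$ would require $f(\rho)>0$. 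Hence $\rho\in Z_f^*$, the profile $u_\rho$ exists, $F_\rho(t):=F(\rho)-F(t)>0$ for all $t\in[0,\rho)$, and $F(\rho)>0$; the borderline case $\rho\in Z_f^0$, which yields only $F_\rho\ge0$ with $F_\rho(0)=0$, is excluded by hypothesis (i) or (ii), and under (iii) by the $C^2$ regularity up to $\partial\mathbb R^N_+$ together with the classical Hopf lemma. In particular $u_\rho'(0)=\left(\frac{p}{p-1}F(\rho)\right)^{1/p}>0$, and since $F_\rho>0$ near $\rho$ the no–zeros assumption forces $f>0$ on $(\rho-\varepsilon,\rho)$. (c) Writing $v:=\rho-u\ge0$ one has $-\Delta_p v=-f(\rho-v)$ with $|f(\rho-v)|\le C v^{p-1}$ near $v=0$ by \eqref{smp_cond}; since $\inf v=0$ cannot be attained (Theorem~\ref{th:smp} would then give $v\equiv0$, contradicting $u=0$ on $\partial\mathbb R^N_+$), there is a sequence of points where $v\to0$, necessarily escaping to infinity in the $x_N$–direction by (a); translating $u$ in $x'$ and $x_N$ and applying Theorem~\ref{th:smp} to the entire limit shows $u\to\rho$ locally uniformly there. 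Consequently, for every $r>0$ there are balls of radius $r$ inside $\mathbb R^N_+$ on which $\sup u$ is as close to $\rho$ as we wish.

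Now fix $\varepsilon_k\downarrow0$ and apply Proposition~\ref{prop:ball} to get radii $r_k\to\infty$ and radial solutions $v_k$ on $B_{r_k}$ with $\rho-\varepsilon_k\le\|v_k\|_{L^\infty}<\rho$ (immediate in case (i); when $f(0)<0$ the construction must be localised near the boundary, where (ii)/(iii) re-enter). Place $B_{r_k}$ tangent to $\partial\mathbb R^N_+$ at the origin. The key claim is
\[
v_k\bigl(\cdot-(0',r_k)\bigr)\le u\quad\text{on }B_{r_k}\bigl((0',r_k)\bigr).
\]
Granting this, its extension by $0$ is a subsolution of \eqref{main} lying between $0$ and $u$; moreover, writing $v_k$ as the function $s\mapsto v_k(r_k-s)$ of the distance $s$ to $\partial B_{r_k}$, the curvature term of the radial equation tends to $0$ uniformly on bounded $s$, so by the ODE analysis behind Theorem~\ref{th:1D} and compactness this function converges to $u_\rho$ as $k\to\infty$. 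Hence $u_\rho(x_N)\le u(x)$ first on each slab $\{0<x_N<M\}$ and then, by translation and exhaustion, on all of $\mathbb R^N_+$. Setting $\underline u(x):=u_\rho(x_N)$ we obtain a one–dimensional solution of \eqref{main} with $\|\underline u\|_{L^\infty}=\rho$ and $\underline u\le u$, positive in $\mathbb R^N_+$ by Theorem~\ref{th:smp} and \eqref{smp_cond}.

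The main obstacle is the key claim $v_k(\cdot-(0',r_k))\le u$, which must be proved without the strong comparison principle and the boundary point lemma for $\Delta_p$. I would slide the ball: for $R\ge r_k$ set $z_k^R=(0',R)$, so $\overline{B_{r_k}(z_k^R)}\subset\overline{\mathbb R^N_+}$, with tangency to $\partial\mathbb R^N_+$ exactly at $R=r_k$. Placing the ball, for $R$ large, around one of the points of (c) where $u$ exceeds $\|v_k\|_{L^\infty}$ on the whole closed ball, the inequality $v_k(\cdot-z_k^R)<u$ holds outright; then slide $R$ down toward $r_k$ using the weak sweeping principle (Theorem~\ref{th:wsp}). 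The strict differential inequalities with a gap $\varepsilon_1+\varepsilon_2\ge\varepsilon>0$ are obtained from a small dilation of $v_k$ (legitimate because $f>0$ on $(\rho-\varepsilon,\rho)$, and by slightly lowering $f$ below $\rho-\varepsilon$), while the strict boundary inequality on $\partial B_{r_k}(z_k^R)$, where $v_k(\cdot-z_k^R)=0$, follows from the lower bound for $u$ away from $\partial\mathbb R^N_+$ in (a). Producing a sliding path along which the hypotheses of Theorem~\ref{th:wsp} hold throughout, and in particular passing the comparison to the tangency endpoint $R=r_k$ — where part of $\partial B_{r_k}(z_k^{r_k})$ lies on $\partial\mathbb R^N_+$ and both functions vanish — is the delicate step, and it is exactly here that hypothesis (i) (whence $\partial_{x_N}u>0$ on $\partial\mathbb R^N_+$ by Theorem~\ref{th:smp}), or (ii) (whence $u_\rho'(0)>0$ controls the boundary layer), or (iii) (whence Theorem~\ref{lem:bpm} applies), enters, together with the no–zeros hypothesis ensuring that the boundary layer of $v_k$ converges to $u_\rho$ rather than to a degenerate profile.
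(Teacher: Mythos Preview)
Your overall architecture---find arbitrarily large balls on which $u$ is close to $\rho$, bring in the radial solutions $v_k$ of Proposition~\ref{prop:ball}, and sweep them under $u$---matches the paper's strategy, and your preliminaries (a)--(c) are essentially the paper's Step~1. But the paper and your proposal diverge at the crucial point, and the route you chose has two real gaps.

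\medskip

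\textbf{The convergence $v_k\to u_\rho$ is not established.} Writing $w_k(s)=v_k(r_k-s)$, you assert that ``the curvature term tends to $0$\dots so by the ODE analysis\dots this function converges to $u_\rho$''. Compactness does give a subsequential $C^1_{\mathrm{loc}}$ limit $w_\infty$ solving the one--dimensional problem with $w_\infty(0)=0$; but Theorem~\ref{th:1D} then says only that $w_\infty\in\{0\}\cup\{u_{\rho'}:\rho'\in Z_f^*\cap(0,\rho]\}$. Proposition~\ref{prop:ball} guarantees merely that the \emph{center} value $w_k(r_k)$ is close to $\rho$; it says nothing about $w_k$ on bounded $s$-intervals, so you have no mechanism ruling out $w_\infty\equiv0$ or $w_\infty=u_{\rho'}$ with $\rho'<\rho$ (both can occur if the profile ``flattens'' slowly). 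Without this, the inequality $u_\rho(x_N)\le u(x)$ simply does not follow. The paper avoids this issue entirely: it never claims any convergence of $v_k$ to $u_\rho$.

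\medskip

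\textbf{The tangency sliding and the strict gap are only sketched.} The weak sweeping principle needs both a uniform differential gap and a uniform boundary gap. Your ``small dilation of $v_k$'' does not, for general locally Lipschitz $f$, produce a strict subsolution with a gap bounded away from zero (the defect $\lambda^{p-1}f(t/\lambda)-f(t)$ can vanish near $t=0$). And at $R=r_k$ the boundary gap collapses; you invoke (i)/(ii)/(iii) to rescue it, but none of the three yields a comparison at the tangent point (in particular (iii) is only available at a single $x_0\in\partial\mathbb{R}^N_+$). The case $f(0)<0$---where Proposition~\ref{prop:ball} is not even applicable as stated---is dismissed in a parenthesis.

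\medskip

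\textbf{How the paper proceeds instead.} The paper never brings the ball to $\partial\mathbb{R}^N_+$ and never lets $r\to\infty$. It perturbs $f$ to $g_{\mu,\delta}(t)=f_\mu(t)-\delta(t+\mu)$ (with an extension $f_\mu$ to $[-\mu,0)$ to cover $f(0)<0$) so that the resulting radial solution $v_r$ is a \emph{strict} subsolution of $-\Delta_p v=f(v)$ with uniform gap on a slightly smaller ball $B_{r_1}$. It then slides this single ball \emph{horizontally} inside $\overline\Sigma_1$, where $u$ is bounded below, to conclude that $u\ge\rho-\varepsilon$ on the whole half-space $\overline\Sigma_{r+1}$ (Step~2). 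Finally (Step~3), it builds an explicit one-dimensional subsolution $v_{\varepsilon,\eta}$ equal to $0$ on $[0,\eta)$ and to a shifted monotone profile with supremum $\rho-\varepsilon$ on $[\eta,\infty)$, which lies below $u$ by Step~2, and obtains $\underline u$ as the \emph{minimal} solution of \eqref{main} in the order interval $[v_{\varepsilon,\eta},u]$; minimality forces $\underline u$ to be one-dimensional, and the hypothesis that $f$ has no zeros in $(\rho-\varepsilon,\rho)$ pins its limit to $\rho$. The hypotheses (i)/(ii)/(iii) enter only once, and cleanly: to establish $F(t)<F(\rho)$ on $[0,\rho)$ when $u<\overline u$ (using the boundary point lemma under (iii)), which is what makes Proposition~\ref{prop:ball} and the construction of $v_{\varepsilon,\eta}$ possible.
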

	\begin{proof}
		For $\alpha<\beta$, we denote
		\[
		\Sigma_{\alpha,\beta} = \{x\in\mathbb{R}^N \mid \alpha < x_N < \beta\} \quad\text{ and }\quad \Sigma_\alpha = \Sigma_{\alpha,\infty} := \{x\in\mathbb{R}^N \mid \alpha < x_N \}.
		\]
		
		We will construct lower solution $\underline u$ in 3 steps:
		
		\textit{Step 1.} For every $r > 0$ and every $\varepsilon > 0$ small enough, there exists $x_0 \in \mathbb{R}^N_+$
		such that $B_r(x_0) \subset \overline\Sigma_1$ and $u \ge \rho - \varepsilon$ in $B_r(x_0)$.
		
		To prove this assertion we take a sequence of points $x_n=(x_n', x_{n,N}) \in \mathbb{R}^N_+$ such that $\lim_{n\to\infty} u(x_n) = \rho$. Since the right hand side of \eqref{main} is bounded, by standard elliptic estimates, we have $u\in C^{1,\alpha}(\overline{\mathbb{R}^N_+})$. Because of the Dirichlet condition, this implies that $(x_{n,N})$ is bounded away from zero. Hence extracting a subsequence we may assume that either $x_{n,N}\to y_0$ for some $y_0>0$ or $x_{n,N}\to+\infty$. We define
		\[
		u_n(x) = u(x+x_n).
		\]
		
		Passing to a subsequence, we have $u_n\to v$ in $C^{1,\alpha'}(\overline{\mathbb{R}^N_+})$, where $v$ is a solution of
		\[
		\begin{cases}
			-\Delta_p v = f(v) & \text{ in } \Sigma,\\
			0 \le v \le \rho & \text{ in } \Sigma,
		\end{cases}
		\]
		where $\Sigma=\{x_N \ge -y_0\}$ in case $x_{n,N}\to y_0$ or $\Sigma=\mathbb{R}^N$ when $x_{n,N}\to+\infty$. In either case, using \eqref{smp_cond} and the fact that $f (\rho) = 0$ and $v(0)=\rho$, the strong maximum principle (Theorem \ref{th:smp}) implies $v\equiv\rho$. However, if $x_{n,N}\to y_0$, then from the Dirichlet condition on $u$, we have $v=0$ on $\partial \Sigma$, a contradiction.
		
		Therefore, the case $x_{n,N}\to+\infty$ must happen and $u(x+x_n) \to \rho$ locally uniformly in $\mathbb{R}^N$.
		
		Consequently, for any $\varepsilon > 0$ and $r > 0$, we have $u(x + x_n) \ge \rho - \varepsilon$ in $B_r$ if $n$ is larger than some $n_0 = n_0(\varepsilon, r)$. Then $u(x) \ge \rho - \varepsilon$ in $B_r(x_n)$ for those values of $n$.
		
		\textit{Step 2.} For every $\varepsilon > 0$, there exists $\eta > 0$ such that $u(x) \ge \rho - \varepsilon$ for $x\in\overline\Sigma_\eta$.
		
		By Lemma \ref{lem:upper}, problem \eqref{main} has a one-dimensional solution $\overline u$ such that $\sup_{\mathbb{R}^N_+} \overline u = \rho$ and $u \le \overline u$. Since $\frac{\partial\overline u}{\partial x_N}>0$, the strong comparison principle (Theorem \ref{th:scp}) can be applied to yield that either $u \equiv \overline u$ or $u < \overline u$. If the former case happens, then the lemma is proved by setting $\underline u = \overline u$.
		
		Hence in what follows, we assume $u < \overline u$ in $\mathbb{R}^N_+$. We claim that
		\begin{equation}\label{s1}
			F(t) < F(\rho) \quad\text{ for all } t\in[0,\rho).
		\end{equation}	
		Indeed, if (i) or (ii) holds, then \eqref{s1} follows from Theorem \ref{th:1D}.
		If (iii) holds, then both $u$ and $\overline u$ belong to $C^2(\mathbb{R}^N_+\cap B_r(x_0))$. Hence we can apply the boundary point lemma (Theorem \ref{lem:bpm}) to obtain
		\[
		\frac{\partial \overline u}{\partial x_N}>\frac{\partial u}{\partial x_N}\ge0.
		\]
		Therefore, an application of Theorem \ref{th:1D} also yields \eqref{s1}.
		
		If $f (0) < 0$, we choose $\mu>0$ small such that
		\begin{equation}\label{s2}
			\frac{f(0)}{2}\mu + F(\rho) > 0.
		\end{equation}
		Then we define function $f_\mu:[-\mu,\rho]\to\mathbb{R}$ by setting
		\[
		f_\mu(t) =
		\begin{cases}
			\frac{f(0)}{\mu}(t+\mu) & \text{ if } t\in[-\mu,0),\\
			f(t) & \text{ if } t\in[0,\rho].
		\end{cases}
		\]
		In the case $f (0) \ge 0$, we simply take $\mu = 0$ and $f_0 = f$. From \eqref{s1} and \eqref{s2}, we deduce
		\begin{equation}\label{s3}
			\int_{t}^{\rho} f_\mu(s) ds > 0 \quad\text{ for all } t\in[-\mu,\rho).
		\end{equation}
		
		Now for each small $\delta>0$, we define
		\[
		g_{\mu,\delta}(t) := f_\mu(t) - \delta (t+\mu) \quad\text{ in } [-\mu,\rho].
		\]
		
		Since $f$ has no zeros in a left neighborhood of $\rho$ and \eqref{s1} holds, we deduce that $f$ is positive in that neighborhood. On the other hand, $\int_{t}^{\rho} g_{\mu,\delta}(s) ds \to \int_{t}^{\rho} f_\mu(s) ds$ as $\delta\to0$, uniformly in $t\in[-\mu,\rho]$. Hence taking into account \eqref{s3}, for sufficiently small $\delta$, there exists $\rho_\delta\nearrow\rho$ (as $\delta\to0$) such that $g_{\mu,\delta}(\rho_\delta)=0$ and
		\[
		G(t):=\int_{t}^{\rho_\delta} g_{\mu,\delta}(s) ds > 0 \quad\text{ for all } t \in [-\mu, \rho_\delta).
		\]
		
		We can therefore apply Proposition \ref{prop:ball} to deduce that the problem
		\[
		\begin{cases}
			-\Delta_p w = g_{\mu,\delta}(w-\mu) &\text{ in } B_r,\\
			w = 0 &\text{ on } \partial B_r
		\end{cases}
		\]
		with some $r=r(\delta)$ has a positive solution $w_r$ verifying
		\[
		\rho_\delta + \mu - \delta \le \|w_r\|_{L^\infty(B_r)} < \rho_\delta + \mu.
		\]
		
		Hence by setting $v_r=w_r-\rho$, we have that $v_r\ge-\mu$,
		\[
		\rho_\delta - \delta \le \|v_r\|_{L^\infty(B_r)} < \rho_\delta
		\]
		and $v_r$ solves the problem
		\[
		\begin{cases}
			-\Delta_p v = g_{\mu,\delta}(v) &\text{ in } B_r,\\
			v = -\mu &\text{ on } \partial B_r.
		\end{cases}
		\]
		
		By Step 1, we can find $x_0$ so that
		$B_r(x_0) \subset \overline\Sigma_1$ and $u \ge \rho_\delta$ in $B_r(x_0)$.
		
		Let $\tilde{x}$ be an arbitrary point in $\overline\Sigma_{r+1}$. We define $x^t = (1-t)x^0 + t\tilde{x}$ and $u^t(x)=u(x+x^t)$.
		Clearly, $B_r(x^t) \subset \overline\Sigma_1$ for all $t \in [0, 1]$.
		
		Since $u > 0$ on the compact set $\bigcup_{t\in[0,1]} \overline{B_r(x^t)}$, we can find $\lambda > 0$ such that $u \ge \lambda$ on this set. Let $r_1 < r$ be such that $-\mu < v_r < \frac{\lambda}{2}$ on $\{|x| = r_1\}$. Then for all $t \in [0, 1]$, we have
		\[
		\begin{cases}
			-\Delta_p u^t = f(u^t) & \text{ in } B_{r_1},\\
			-\Delta_p v_r = g_{\mu,\delta}(v_r) \le f(v_r) - \gamma & \text{ in } B_{r_1},\\
			u^t \ge v_r + \frac{\lambda}{2} & \text{ on } \partial B_{r_1},
		\end{cases}
		\]
		where $\gamma=\inf_{x\in B_{r_1}} [f (v_r(x)) - g_{\mu,\delta}(v_r(x))] > 0$. Moreover, $u^0 = u \ge \rho_\delta \ge v_r$ in $B_{r_1}$. Thus we can apply the weak sweeping principle (Theorem \ref{th:wsp}) to deduce that $u^t \ge v_r$ in $B_{r_1}$ for all $t \in [0, 1]$. In particular, $u(\tilde{x}) \ge v_r(0) \ge \rho_\delta - \delta$. Since $\tilde{x} \in \overline\Sigma_{r+1}$ is arbitrary, we have
		\[
		u \ge \rho_\delta - \delta \quad \text{ in } \overline\Sigma_{r+1}.
		\]
		Since $\rho_\delta\to\rho$ as $\delta\to0$, we can choose $\delta$ small such that $\rho_\delta - \delta > \rho - \varepsilon$ for given $\varepsilon>0$. Hence the conclusion follows.
		
		\textit{Step 3.} There exists a minimal solution $\underline u$ of \eqref{main} such that $\|\underline u\|_{L^\infty(\mathbb{R}^N)}=\rho$ and $\underline u \le u$ in $\mathbb{R}^N$.
		
		Since $f$ has no zeros in a left neighborhood of $\rho$ and \eqref{s1} holds, there exists a small $\delta > 0$ such that $f > 0$ in $[\rho - \delta, \rho)$. Now we choose $\varepsilon<\delta$ sufficiently small and then modify $f$ over $[0, \rho]$ to obtain a new function $f_\varepsilon$ such that
		\[
		f_\varepsilon = f \text{ in } [0, \rho-\delta],\quad
		0<f_\varepsilon<f \text{ in } (\rho-\delta,\rho-\varepsilon),\quad
		f_\varepsilon(\rho-\varepsilon)=0
		\]
		and
		\[
		\int_{t}^{\rho-\varepsilon} f_\varepsilon(s) ds > 0 \quad\text{ for all } t\in[0, \rho-\varepsilon).
		\]
		
		Hence we can apply Theorem \ref{th:1D} to see that the problem
		\[
		\begin{cases}
			-\Delta_p v = f_\varepsilon(v) & \text{ in } \mathbb{R}_+,\\
			v(0)=0,~ \lim_{t\to+\infty} v(t) = \rho-\varepsilon
		\end{cases}
		\]
		has a unique positive solution $v_\varepsilon$, which is monotone increasing. For any $\eta>0$, we define
		\[
		v_{\varepsilon,\eta}(t) =
		\begin{cases}
			0 & t \in [0,\eta),\\
			v_\varepsilon(t-\eta) & t \in [\eta,+\infty).
		\end{cases}
		\]
		
		It is easy to check that $v_{\varepsilon,\eta}$ is a lower solution of \eqref{main}. By Step 2, we can find a large number $\eta$ so that $u>\rho-\varepsilon$ in $\{x_N\ge \eta\}$. Hence in view of $v_\varepsilon\le \rho-\varepsilon$, we find that $v_{\varepsilon,\eta}\le u$ in $\mathbb{R}^N_+$ for such $\eta$. For $n$ large, using the auxiliary problem
		\[
		\begin{cases}
			-\Delta_p u = f(u) &\text{ in } B_n^+,\\
			u = (\rho-\varepsilon)\chi_{\{x_N\ge\eta\}} &\text{ on } \partial B_n^+
		\end{cases}
		\]
		and arguing as in the proof of Lemma \ref{lem:upper}, we find that equation \eqref{main} has a minimal solution $\underline u$ in the order interval $[v_{\varepsilon,\eta}, u]$.
		
		Moreover, the minimality of $\underline u$ implies that $\underline u$ is a function of $x_N$ only. Thus $\underline u$ is a solution to \eqref{1D}. Due to the lower bound $v_{\varepsilon,\eta}$, we see that $\underline u$ is not periodic. Hence it must be positive and monotone by Theorem \ref{th:1D}. Moreover, $\lim_{t\to+\infty} \underline u(t)$ is a zero of $f$ contained in $[\rho-\varepsilon,\rho]$. But $\rho$ is the only zero of $f$ in this range. Hence $\lim_{t\to+\infty} \underline u(t)=\rho$. This implies $\|\underline u\|_{L^\infty(\mathbb{R}^N)}=\rho$.
	\end{proof}
	
	We are ready to conclude Theorems \ref{th:main} and \ref{th:sub}.
	\begin{proof}[Proof of Theorem \ref{th:main}]
		By Lemmas \ref{lem:upper} and \ref{lem:lower}, there exist two one-dimensional positive solutions $\overline u$ and $\underline u$ such that $\underline u \le u \le \overline u$ in $\mathbb{R}^N$. Moreover, $\|\underline u\|_{L^\infty(\mathbb{R}^N)}=\|\overline u\|_{L^\infty(\mathbb{R}^N)}=\rho$. Therefore, we can apply Theorem \ref{th:1D} to deduce $\underline u(x) = \overline u(x) = u_\rho(x_N)$.
		
		Hence we have $u = u_\rho(x_N)$. Furthermore, \eqref{deri} follows from properties of $u_\rho$ stated in Theorem \ref{th:1D}. This completes the proof of Theorem \ref{main}.
	\end{proof}
	
	\begin{proof}[Proof of Theorem \ref{th:sub}]
		According to \cite[Theorem 1.7]{MR2886112}, it can be concluded that $0 < u \le \rho$.		
		Due to the behavior of the nonlinearity near \( \rho \) (see (ii)), the strong maximum principle applies, which leads to the conclusion that \( 0 < u < \rho \) in $\mathbb{R}^N_+$. Following the same reasoning as in the proof of \cite[Theorem 1.3]{MR3303939}, it can be inferred that \( u \) is strictly bounded away from \( \rho \) in \( \Sigma_\lambda \) for any \( \lambda > 0 \). Moreover, the standard gradient estimate gives $|\nabla u|\in L^\infty(\mathbb{R}^N_+)$. Now we can apply the results in \cite{MR3303939,MR3752525} we deduce that
		\[
		\frac{\partial u}{\partial x_N} \ge 0 \text{ in } \mathbb{R}^N_+.
		\]
		In fact, it's worth noting that $f>0$ within the range of values the solution takes in any strip, which is sufficient to reapply the moving plane method as in \cite{MR3303939,MR3752525}.
		
		Now the function
		\[
		v(x'):=\lim_{x_N\to+\infty} u(x',x_N)
		\]
		is well-defined and satisfies
		\[
		-\Delta_p v = f(v) \quad\text{ in } \mathbb{R}^{N-1}.
		\]
		
		We claim that $\sup_{\mathbb{R}^{N-1}} v = \rho$. We argue by contradiction. Suppose $\sup_{\mathbb{R}^{N-1}} v = t_1 < \rho$. Then $v\le t_1$ in $\mathbb{R}^{N-1}$. Assumptions (i) and (iii) implies
		\[
		f(t) > c_1 t^\gamma \text{ in } (0, t_1)
		\]
		for some $c_1>0$.
		Hence $v$ satisfies
		\[
		\begin{cases}
			-\Delta_p v \ge c_1 v^\gamma &\text{ in } \mathbb{R}^{N-1},\\
			v > 0 &\text{ in } \mathbb{R}^{N-1},
		\end{cases}
		\]
		This contradicts a Liouville-type theorem by E. Mitidieri and S.I. Pokhozhaev \cite{MR1784317}. Therefore, \(\sup_{\mathbb{R}^{N-1}} v = \rho\). Hence, the desired conclusion follows by applying Theorem \eqref{th:main}, since \(\sup_{\mathbb{R}^N_+} u = \sup_{\mathbb{R}^{N-1}} v = \rho\) and \(f(\rho) = 0\). Notice that $\frac{\partial u}{\partial x_N} > 0$ on $\partial{\mathbb{R}^N_+}$ follows from Hopf's lemma which can be applied due to $f(0)\ge0$.
	\end{proof}
	
	\section*{Acknowledgments}
	This research is funded by Vietnam National Foundation for Science and Technology Development (NAFOSTED) under grant number 101.02-2023.35.
	
	\bibliographystyle{abbrvurl}
	\bibliography{../../../references}

\begin{thebibliography}{10}

\bibitem{MR143162}
A.~D. Alexandrov.
\newblock A characteristic property of spheres.
\newblock {\em Ann. Mat. Pura Appl. (4)}, 58:303--315, 1962.
\newblock \href {https://doi.org/10.1007/BF02413056}
  {\path{doi:10.1007/BF02413056}}.

\bibitem{MR794096}
S.~B. Angenent.
\newblock Uniqueness of the solution of a semilinear boundary value problem.
\newblock {\em Math. Ann.}, 272(1):129--138, 1985.
\newblock \href {https://doi.org/10.1007/BF01455933}
  {\path{doi:10.1007/BF01455933}}.

\bibitem{MR1655510}
H.~Berestycki, L.~Caffarelli, and L.~Nirenberg.
\newblock Further qualitative properties for elliptic equations in unbounded
  domains.
\newblock {\em Ann. Scuola Norm. Sup. Pisa Cl. Sci. (4)}, 25(1-2):69--94
  (1998), 1997.
\newblock Dedicated to Ennio De Giorgi.
\newblock URL: \url{http://www.numdam.org/item?id=ASNSP_1997_4_25_1-2_69_0}.

\bibitem{MR1044809}
H.~Berestycki, L.~A. Caffarelli, and L.~Nirenberg.
\newblock Uniform estimates for regularization of free boundary problems.
\newblock In {\em Analysis and partial differential equations}, volume 122 of
  {\em Lecture Notes in Pure and Appl. Math.}, pages 567--619. Dekker, New
  York, 1990.

\bibitem{MR1260436}
H.~Berestycki, L.~A. Caffarelli, and L.~Nirenberg.
\newblock Symmetry for elliptic equations in a half space.
\newblock In {\em Boundary value problems for partial differential equations
  and applications}, volume~29 of {\em RMA Res. Notes Appl. Math.}, pages
  27--42. Masson, Paris, 1993.

\bibitem{MR1395408}
H.~Berestycki, L.~A. Caffarelli, and L.~Nirenberg.
\newblock Inequalities for second-order elliptic equations with applications to
  unbounded domains. {I}.
\newblock {\em Duke Math. J.}, 81(2):467--494, 1996.
\newblock A celebration of John F. Nash, Jr.
\newblock \href {https://doi.org/10.1215/S0012-7094-96-08117-X}
  {\path{doi:10.1215/S0012-7094-96-08117-X}}.

\bibitem{MR1470317}
H.~Berestycki, L.~A. Caffarelli, and L.~Nirenberg.
\newblock Monotonicity for elliptic equations in unbounded {L}ipschitz domains.
\newblock {\em Comm. Pure Appl. Math.}, 50(11):1089--1111, 1997.
\newblock \href
  {https://doi.org/10.1002/(SICI)1097-0312(199711)50:11<1089::AID-CPA2>3.0.CO;2-6}
  {\path{doi:10.1002/(SICI)1097-0312(199711)50:11<1089::AID-CPA2>3.0.CO;2-6}}.

\bibitem{MR2145284}
L.~Caffarelli and S.~Salsa.
\newblock {\em A geometric approach to free boundary problems}, volume~68 of
  {\em Graduate Studies in Mathematics}.
\newblock American Mathematical Society, Providence, RI, 2005.
\newblock \href {https://doi.org/10.1090/gsm/068} {\path{doi:10.1090/gsm/068}}.

\bibitem{MR937538}
P.~Cl\'{e}ment and G.~Sweers.
\newblock Existence and multiplicity results for a semilinear elliptic
  eigenvalue problem.
\newblock {\em Ann. Scuola Norm. Sup. Pisa Cl. Sci. (4)}, 14(1):97--121, 1987.
\newblock URL: \url{http://www.numdam.org/item?id=ASNSP_1987_4_14_1_97_0}.

\bibitem{MR3550849}
C.~Cort\'{a}zar, M.~Elgueta, and J.~Garc\'{\i}a-Meli\'{a}n.
\newblock Nonnegative solutions of semilinear elliptic equations in
  half-spaces.
\newblock {\em J. Math. Pures Appl. (9)}, 106(5):866--876, 2016.
\newblock \href {https://doi.org/10.1016/j.matpur.2016.03.014}
  {\path{doi:10.1016/j.matpur.2016.03.014}}.

\bibitem{MR1632933}
L.~Damascelli.
\newblock Comparison theorems for some quasilinear degenerate elliptic
  operators and applications to symmetry and monotonicity results.
\newblock {\em Ann. Inst. H. Poincar\'{e} C Anal. Non Lin\'{e}aire},
  15(4):493--516, 1998.
\newblock \href {https://doi.org/10.1016/S0294-1449(98)80032-2}
  {\path{doi:10.1016/S0294-1449(98)80032-2}}.

\bibitem{MR2654242}
L.~Damascelli and B.~Sciunzi.
\newblock Monotonicity of the solutions of some quasilinear elliptic equations
  in the half-plane, and applications.
\newblock {\em Differential Integral Equations}, 23(5-6):419--434, 2010.

\bibitem{MR1955278}
E.~N. Dancer and Y.~Du.
\newblock Some remarks on {L}iouville type results for quasilinear elliptic
  equations.
\newblock {\em Proc. Amer. Math. Soc.}, 131(6):1891--1899, 2003.
\newblock \href {https://doi.org/10.1090/S0002-9939-02-06733-3}
  {\path{doi:10.1090/S0002-9939-02-06733-3}}.

\bibitem{MR3058211}
E.~N. Dancer, Y.~Du, and M.~Efendiev.
\newblock Quasilinear elliptic equations on half- and quarter-spaces.
\newblock {\em Adv. Nonlinear Stud.}, 13(1):115--136, 2013.
\newblock \href {https://doi.org/10.1515/ans-2013-0107}
  {\path{doi:10.1515/ans-2013-0107}}.

\bibitem{MR709038}
E.~DiBenedetto.
\newblock {$C\sp{1+\alpha }$} local regularity of weak solutions of degenerate
  elliptic equations.
\newblock {\em Nonlinear Anal.}, 7(8):827--850, 1983.
\newblock \href {https://doi.org/10.1016/0362-546X(83)90061-5}
  {\path{doi:10.1016/0362-546X(83)90061-5}}.

\bibitem{MR2056284}
Y.~Du and Z.~Guo.
\newblock Symmetry for elliptic equations in a half-space without strong
  maximum principle.
\newblock {\em Proc. Roy. Soc. Edinburgh Sect. A}, 134(2):259--269, 2004.
\newblock \href {https://doi.org/10.1017/S0308210500003218}
  {\path{doi:10.1017/S0308210500003218}}.

\bibitem{MR2822222}
M.~Efendiev and F.~Hamel.
\newblock Asymptotic behavior of solutions of semilinear elliptic equations in
  unbounded domains: two approaches.
\newblock {\em Adv. Math.}, 228(2):1237--1261, 2011.
\newblock \href {https://doi.org/10.1016/j.aim.2011.06.013}
  {\path{doi:10.1016/j.aim.2011.06.013}}.

\bibitem{MR4439897}
F.~Esposito, A.~Farina, L.~Montoro, and B.~Sciunzi.
\newblock Monotonicity of positive solutions to quasilinear elliptic equations
  in half-spaces with a changing-sign nonlinearity.
\newblock {\em Calc. Var. Partial Differential Equations}, 61(4):Paper No. 154,
  14, 2022.
\newblock \href {https://doi.org/10.1007/s00526-022-02250-3}
  {\path{doi:10.1007/s00526-022-02250-3}}.

\bibitem{MR2002396}
A.~Farina.
\newblock Rigidity and one-dimensional symmetry for semilinear elliptic
  equations in the whole of {$\Bbb R^N$} and in half spaces.
\newblock {\em Adv. Math. Sci. Appl.}, 13(1):65--82, 2003.

\bibitem{MR4142367}
A.~Farina.
\newblock Some results about semilinear elliptic problems on half-spaces.
\newblock {\em Math. Eng.}, 2(4):709--721, 2020.
\newblock \href {https://doi.org/10.3934/mine.2020033}
  {\path{doi:10.3934/mine.2020033}}.

\bibitem{MR3303939}
A.~Farina, L.~Montoro, G.~Riey, and B.~Sciunzi.
\newblock Monotonicity of solutions to quasilinear problems with a first-order
  term in half-spaces.
\newblock {\em Ann. Inst. H. Poincar\'{e} Anal. Non Lin\'{e}aire}, 32(1):1--22,
  2015.
\newblock \href {https://doi.org/10.1016/j.anihpc.2013.09.005}
  {\path{doi:10.1016/j.anihpc.2013.09.005}}.

\bibitem{MR2886112}
A.~Farina, L.~Montoro, and B.~Sciunzi.
\newblock Monotonicity and one-dimensional symmetry for solutions of
  {$-\Delta_pu=f(u)$} in half-spaces.
\newblock {\em Calc. Var. Partial Differential Equations}, 43(1-2):123--145,
  2012.
\newblock \href {https://doi.org/10.1007/s00526-011-0405-z}
  {\path{doi:10.1007/s00526-011-0405-z}}.

\bibitem{MR3118616}
A.~Farina, L.~Montoro, and B.~Sciunzi.
\newblock Monotonicity of solutions of quasilinear degenerate elliptic equation
  in half-spaces.
\newblock {\em Math. Ann.}, 357(3):855--893, 2013.
\newblock \href {https://doi.org/10.1007/s00208-013-0919-0}
  {\path{doi:10.1007/s00208-013-0919-0}}.

\bibitem{MR3752525}
A.~Farina, L.~Montoro, and B.~Sciunzi.
\newblock Monotonicity in half-space of positive solutions to
  {$-\Delta_pu=f(u)$} in the case {$p>2$}.
\newblock {\em Ann. Sc. Norm. Super. Pisa Cl. Sci. (5)}, 17(4):1207--1229,
  2017.
\newblock \href {https://doi.org/10.2422/2036-2145.201511_006}
  {\path{doi:10.2422/2036-2145.201511_006}}.

\bibitem{MR3593525}
A.~Farina and B.~Sciunzi.
\newblock Qualitative properties and classification of nonnegative solutions to
  {$-\Delta u=f(u)$} in unbounded domains when {$f(0)<0$}.
\newblock {\em Rev. Mat. Iberoam.}, 32(4):1311--1330, 2016.
\newblock \href {https://doi.org/10.4171/RMI/918} {\path{doi:10.4171/RMI/918}}.

\bibitem{MR3641643}
A.~Farina and B.~Sciunzi.
\newblock Monotonicity and symmetry of nonnegative solutions to {$-\Delta
  u=f(u)$} in half-planes and strips.
\newblock {\em Adv. Nonlinear Stud.}, 17(2):297--310, 2017.
\newblock \href {https://doi.org/10.1515/ans-2017-0010}
  {\path{doi:10.1515/ans-2017-0010}}.

\bibitem{MR2483642}
A.~Farina, B.~Sciunzi, and E.~Valdinoci.
\newblock Bernstein and {D}e {G}iorgi type problems: new results via a
  geometric approach.
\newblock {\em Ann. Sc. Norm. Super. Pisa Cl. Sci. (5)}, 7(4):741--791, 2008.

\bibitem{MR619749}
B.~Gidas and J.~Spruck.
\newblock A priori bounds for positive solutions of nonlinear elliptic
  equations.
\newblock {\em Comm. Partial Differential Equations}, 6(8):883--901, 1981.
\newblock \href {https://doi.org/10.1080/03605308108820196}
  {\path{doi:10.1080/03605308108820196}}.

\bibitem{MR810619}
B.~Kawohl.
\newblock {\em Rearrangements and convexity of level sets in {PDE}}, volume
  1150 of {\em Lecture Notes in Mathematics}.
\newblock Springer-Verlag, Berlin, 1985.
\newblock \href {https://doi.org/10.1007/BFb0075060}
  {\path{doi:10.1007/BFb0075060}}.

\bibitem{10.1007/s11118-024-10157-1}
P.~Le.
\newblock Monotonicity in half-spaces for $p$-{L}aplace problems with a
  sublinear nonlinearity.
\newblock {\em Potential Anal.}, 2024.
\newblock \href {https://doi.org/10.1007/s11118-024-10157-1}
  {\path{doi:10.1007/s11118-024-10157-1}}.

\bibitem{2024arXiv240900365L}
P.~{Le}.
\newblock {Singular semilinear elliptic equations in half-spaces}.
\newblock {\em arXiv e-prints}, page arXiv:2409.00365, Aug. 2024.
\newblock \href {https://arxiv.org/abs/2409.00365} {\path{arXiv:2409.00365}},
  \href {https://doi.org/10.48550/arXiv.2409.00365}
  {\path{doi:10.48550/arXiv.2409.00365}}.

\bibitem{MR4771444}
P.~Le.
\newblock Sliding method and one-dimensional symmetry for {$p$}-{L}aplace
  equations.
\newblock {\em Rev. R. Acad. Cienc. Exactas F\'{\i}s. Nat. Ser. A Mat. RACSAM},
  118(4):Paper No. 141, 2024.
\newblock \href {https://doi.org/10.1007/s13398-024-01642-8}
  {\path{doi:10.1007/s13398-024-01642-8}}.

\bibitem{MR969499}
G.~M. Lieberman.
\newblock Boundary regularity for solutions of degenerate elliptic equations.
\newblock {\em Nonlinear Anal.}, 12(11):1203--1219, 1988.
\newblock \href {https://doi.org/10.1016/0362-546X(88)90053-3}
  {\path{doi:10.1016/0362-546X(88)90053-3}}.

\bibitem{MR1784317}
E.~Mitidieri and S.~I. Pokhozhaev.
\newblock Absence of positive solutions for quasilinear elliptic problems in
  {${\bf R}^N$}.
\newblock {\em Tr. Mat. Inst. Steklova}, 227(Issled. po Teor. Differ. Funkts.
  Mnogikh Perem. i ee Prilozh. 18):192--222, 1999.

\bibitem{MR2356201}
P.~Pucci and J.~Serrin.
\newblock {\em The maximum principle}, volume~73 of {\em Progress in Nonlinear
  Differential Equations and their Applications}.
\newblock Birkh\"{a}user Verlag, Basel, 2007.

\bibitem{MR3371569}
B.~Sciunzi.
\newblock Monotonicity results in half-spaces for the {$p$}-{L}aplacian.
\newblock {\em Proc. Roy. Soc. Edinburgh Sect. A}, 145(3):597--610, 2015.
\newblock \href {https://doi.org/10.1017/S0308210513001315}
  {\path{doi:10.1017/S0308210513001315}}.

\bibitem{MR333220}
J.~Serrin.
\newblock A symmetry problem in potential theory.
\newblock {\em Arch. Rational Mech. Anal.}, 43:304--318, 1971.
\newblock \href {https://doi.org/10.1007/BF00250468}
  {\path{doi:10.1007/BF00250468}}.

\bibitem{MR727034}
P.~Tolksdorf.
\newblock Regularity for a more general class of quasilinear elliptic
  equations.
\newblock {\em J. Differential Equations}, 51(1):126--150, 1984.
\newblock \href {https://doi.org/10.1016/0022-0396(84)90105-0}
  {\path{doi:10.1016/0022-0396(84)90105-0}}.

\bibitem{MR768629}
J.~L. V\'{a}zquez.
\newblock A strong maximum principle for some quasilinear elliptic equations.
\newblock {\em Appl. Math. Optim.}, 12(3):191--202, 1984.
\newblock \href {https://doi.org/10.1007/BF01449041}
  {\path{doi:10.1007/BF01449041}}.

\end{thebibliography}
	
\end{document}